\documentclass[11pt]{article}
\usepackage{epsf}
\usepackage{color}
\usepackage{epsfig}

\usepackage{graphicx}
\usepackage{amsmath}
\usepackage{latexsym}
\usepackage{amssymb}
\usepackage{amsfonts}

\usepackage{booktabs}
\usepackage{tabularx}
\usepackage{array}

\usepackage{listings}
\usepackage{hyperref}
\usepackage[T1]{fontenc}
\usepackage{lmodern}
\usepackage{xspace}

\usepackage{tikz}
\usepackage{pgfplots}
\usepackage[many]{tcolorbox}

\tikzstyle{tituloStyle} = [
draw=green!30!black,
fill=green!9,
very thick,
rectangle,
rounded corners=3mm,
inner sep=10pt,
inner ysep=15pt,
anchor=west,
opacity=0.9,
]

\colorlet{fondo}{blue!10!white}
\colorlet{coments}{blue!40!white}	
\pgfplotsset{every axis/.append style={
	axis x line=middle,    
	axis y line=middle,    
	axis line style={<->,color=blue}, 
	xlabel={$x$},          
	ylabel={$y$},          
	}}

\usepackage{a4}

\newtheorem{theorem}{Theorem}

\newtheorem{proposition}[theorem]{Proposition}

\newtheorem{corollary}[theorem]{Corollary}
\newenvironment{proof}[1][Proof]{\textbf{#1.} }
     {\    \rule{0.5em}{0.5em}}

\def\beq{\begin{equation}}
\def\eeq{\end{equation}}
\def\beqnn{\begin{eqnarray*}}
\def\eeqnn{\end{eqnarray*}}
\def\bea{\begin{eqnarray}}
\def\eea{\end{eqnarray}}

\newcommand{\Cay}{\mathrm{Cay}}

\newcommand{\e}{\ensuremath{\mathrm{e}}}

\newcommand{\R}{\mathbb{R}}

\def\theequation{\arabic{section}.\arabic{equation}}

\begin{document}

\title{On a class of modified Cayley--Magnus methods}

\author{Sergio Blanes
	\thanks{Instituto Universitario de Matem\'{a}tica Multidisciplinar, 
		Universitat Polit\`{e}cnica de Val\`{e}ncia. Spain. Email: \texttt{serblaza@imm.upv.es}}
   \and
 Fernando Casas 
 \thanks{Departament de Matem\`atiques and IMAC, Universitat Jaume I, Spain. Email: \texttt{casas@uji.es}}
   \and
Arieh Iserles 
\thanks{Department of Applied Mathematics and Theoretical Physics,
	University of Cambridge, UK. Email: \texttt{ai10@cam.ac.uk}}
   }

\maketitle


\begin{abstract}

We introduce a new class of numerical integrators for the time integration of non-autonomous linear ordinary differential equations whose coefficient matrix is sparse and evolves within a quadratic matrix Lie group. In contrast to standard Lie group integrators, the proposed methods avoid the evaluation of matrix exponentials acting on vectors and instead rely on solving a sequence of linear systems with sparse coefficient matrices. Moreover, they are well suited for problems arising from unbounded operators, as they inherently produce bounded solutions. We construct optimised schemes of orders four and six and assess their performance on a representative numerical example, demonstrating clear advantages over existing Lie-group integrators.

%

\end{abstract}

\section{Introduction}  

\paragraph{The class of problems.} In this work we are concerned with the numerical time integration of ordinary differential equations (ODEs) resulting from 
the space discretization of linear partial differential equations (PDEs) of the form
\begin{equation}\label{eq:2}
	\frac{\partial \psi(x,t)}{\partial t}={\cal L}(x,t)\psi(x,t), \qquad \psi(x,0)=\psi_0(x),\quad t\geq0,
\end{equation}
where $\psi: \mathbb{R}^d \times \mathbb{R}_+ \longmapsto \mathbb{C}$, ${\cal L}(x,t)$ is a (possibly unbounded) linear operator.
When finite difference, Galerkin or spectral methods are used for the discretization of \eqref{eq:2} in space, a typical outcome is a 
non-autonomous linear ODE
\begin{equation}\label{eq:1}
	x'=A(t)x, \qquad x(0)=x_0\in\mathbb{C}^{N},
\end{equation}
where $A(t) \in \mathrm{GL}_N(\mathbb{C})$, the group of all $N \times N$ non-singular complex-valued matrices, the dimension  $N$ is usually quite large and $A$ has some specific structure. 
For instance, suppose that there exists 
an orthonormal basis of functions $\Phi=\{\phi_n(x)\}_{n\in\mathbb{Z}_+}$ so that the solution of \eqref{eq:2} can be written in the form
\[
  \psi(x,t) = \sum_{j=0}^{\infty} x_j(t) \phi_j(x).
\]
If this series is truncated after, say, $N$ terms, then the coefficients $x_j(t)$ satisfy the equation \eqref{eq:1} with a matrix $A$ with entries
$A_{i j}=\left\langle{\cal L}\phi_j,\phi_i\right\rangle$. Once the basis is appropriately chosen, the matrix $A$ is sparse and with a relatively simple structure
\cite{iserles24mfo,iserles25ssm}. As a consequence, the action of $A$, or more generally of a function of $A$ on any vector
$v \in \mathbb{C}^N$, can be  computed efficiently.

We are mainly interested in the case where, in addition, $A(t)$ takes values in the Lie algebra 
\begin{equation} \label{alg1}
  o_N(\mathbb{C},J) = \{ \Omega \in \mathbb{C}^{N \times N} \, : \,  \Omega^* J + J \Omega = 0 \},
\end{equation}  
where $J$ is some constant matrix in $\mathrm{GL}_N(\mathbb{C})$. In that case,
the fundamental matrix $X(t)$ of \eqref{eq:1} evolves in the Lie group
\begin{equation} \label{group1}
  \mathrm{O}_N(\mathbb{C},J) = \{ X \in \mathrm{GL}_N(\mathbb{C}) \, : \, X^* J X = J \},
\end{equation}
whose associated Lie algebra is  $o_N(\mathbb{C},J)$. In the real case, \eqref{group1} is called a $J$-orthogonal (or quadratic) Lie group \cite{postnikov94lga}. 
Examples of this class are the unitary and orthogonal group (when $J=I$, the identity matrix), the symplectic group (when $J$ is the basic
symplectic structure matrix) and the Lorentz group (corresponding to $N=4$ and  $J = \mathrm{diag}(1,-1,-1,-1)$).
Another relevant application arises when eq. \eqref{eq:2} is the linear Schr\"odinger equation, in which case the associated ODE \eqref{eq:1} evolves
within the unitary group.

When constructing numerical methods for the time integration of \eqref{eq:1}, two essential criteria must be satisfied: firstly, the scheme should exploit the sparsity of $A$ to achieve computational efficiency; secondly, the numerical approximation should preserve the qualitative features of the exact solution, in particular it should evolve on the Lie group $ \mathrm{O}_N(\mathbb{C},J)$.

\paragraph{Lie-group integrators.}
There are, as is well known, numerous methods for solving \eqref{eq:1} that satisfy one of the above requirements, but in fact only few  satisfy both. An important class
of schemes in this setting is formed by Lie-group integrators: they provide by construction numerical approximations evolving on $ \mathrm{O}_N(\mathbb{C},J)$, thus
ensuring preservation of symmetries and conservation laws inherent to this algebraic structure. Among them, we  mention 
Magnus  integrators \cite{iserles99ots,blanes09tma}, Commutator-Free Quasi-Magnus integrators (CFQM) \cite{blanes06fas,alvermann11hoc,blanes17hoc}, and those based on the Fer expansion \cite{blanes09tma}. 
Each of them has, however, its limitations in this context. Thus, both Magnus and Fer integrators require computing iterated integrals
of nested commutators of $A$ at different times and the sparsity of the matrix $A$ is typically lost. In addition, they must evaluate the action of
one or more exponentials on vectors. In consequence, the previous first criterion is not
satisfied. 

With respect to CFQM methods, although the sparsity of $A$ is retained when building the scheme, they still require computing the action of
exponentials on vectors, so that they are computationally expensive for most problems.

Other numerical schemes exist that preserve the Lie group structure of $ \mathrm{O}_N(\mathbb{C},J)$ without relying on matrix exponentials. A notable example is provided by Cayley methods \cite{diele98tct, iserles01oct, iserles00lgm, iserles00otd, marthinsen01qmb}. However, these methods still involve commutators and products of the matrix $A$, rendering the resulting schemes relatively costly. 

\paragraph{Rational methods.} Suppose that we apply the symmetric second-order implicit midpoint rule to advance the numerical solution of \eqref{eq:1} from 
time $t_n$ to time $t_{n+1} = t_n + h$ with time-step  $h$. The algorithm reads
\[
  x_{n+1}= x_n+ \tfrac{h}{2} A_{1/2}(x_n+x_{n+1} ),
\]
where $A_{1/2} \equiv A(t_n + h/2)$, and it produces an approximation $x_{n+1} \approx x(t_{n+1})$. Equivalently,
\begin{equation} \label{eq:Midpoint}
 x_{n+1}=(I-\tfrac{h}{2} A_{1/2})^{-1}(I+\tfrac{h}2A_{1/2}) \, x_n.
\end{equation}
Since $A_{1/2} \in o_N(\mathbb{C},J)$ and \eqref{eq:Midpoint} is nothing but the Cayley transform \cite{postnikov94lga}, it is clear that the approximation rendered by
this method remains in $ \mathrm{O}_N(\mathbb{C},J)$. In addition, it only requires the solution of the linear system
\begin{displaymath}
	(I-\tfrac{h}2A_{1/2})x_{n+1}=(I+\tfrac{h}2A_{1/2})x_n,
\end{displaymath}
taking full advantage of the sparsity of $A$. Unfortunately, this method is only second-order, and thus not appropriate when high accuracy is desired.
Higher order methods of this form are still able to preserve the Lie group structure of $ \mathrm{O}_N(\mathbb{C},J)$. A case in point is the class of 
implicit Runge--Kutta Gauss--Legendre (RKGL) methods 
\cite{iserles09afc, sanz-serna94nhp}, 
although their formulation falls short of fully exploiting the simple structure of $A$.

\paragraph{Our contribution.} In this work we propose a new class of efficient schemes  that generalise \eqref{eq:Midpoint} to higher order,  whereas
preserving the Lie group structure associated with \eqref{group1}. For one time step $h$, they are of the form $x_{n+1} =  \Psi_h^{[p]} x_n$, with
\begin{equation}\label{eq:Cayley_m}
 \Psi_h^{[p]} =\prod_{k=1}^m\Cay (h \, {\bf u}_k \ast{\bf A}),
\qquad \qquad
\Cay(x)=\frac{1+\frac{1}2x}{1-\frac{1}2x},
\end{equation}
and
\begin{displaymath}
  {\bf u}_k \ast{\bf A}= \sum_{i=1}^{\ell} u_{k,i} \, A_i, \qquad  A_i=A(t_n+c_ih), \qquad i=1,2,\ldots,\ell. 
\end{displaymath}  
Here $c_i$, $i=1,\ldots,\ell$, are the nodes of a chosen quadrature rule, and $u_{k,i}$ are coefficients to be determined, depending on the quadrature nodes, $c_i$, and the number of maps used, $m$. 
Note that the implementation requires sequential solution of linear systems of the form
\begin{displaymath}
  \left( I - \frac{h}{2} {\bf u}_k \ast{\bf A} \right) y_{k}= b_k, \qquad k=1,\ldots,m,
\end{displaymath}
where $b_k$ are known vectors, but the corresponding coefficient matrix remains sparse, since it only involves linear combinations of the $A_i$s.
Thus, the new schemes, which we call \emph{modified Cayley--Magnus methods\/}, can be seen as combining the advantages of commutator-free quasi-Magnus methods while avoiding their main drawbacks. In particular, they do not require the evaluation of matrix exponentials acting on vectors, but instead rely solely on solving a sequence of linear systems with sparse coefficient matrices.
Moreover, these methods are well suited for problems arising from unbounded operators, as they inherently produce bounded solutions. In this sense, they can be interpreted as introducing a form of high-frequency filtering in both oscillatory and stiff problems, while still preserving the quadratic Lie group structure of 
$\mathrm{O}_N(\mathbb{C},J)$.


We should comment that schemes of the form \eqref{eq:Cayley_m} have already been explored in the recent literature. Thus, in \cite{maslovskaya26cfc}, a method of order 4  has been constructed and applied to the linear time-dependent Schr\"odinger equation and in 
quantum optimal control
\cite{wembe26ccf}. Here we carry out a systematic analysis and construct 
optimised methods of orders $p=4$ and $p=6$.

The paper is organized as follows. In Section~\ref{sect.2}, we recall that the solution of \eqref{eq:1} can be approximated up to order $p = 2\ell$ using only the values of the coefficient matrix $A$ evaluated at the $\ell$ nodes of a quadrature rule of order $2\ell$. Although this is a well-known result in the context of Lie group integrators, we include  for completeness a detailed and self-contained proof in the appendix.
In Section~\ref{sect.3}, we introduce the class of modified Cayley--Magnus integrators considered in this work and discuss their relationship with Magnus methods. This connection is then exploited to construct new schemes of orders four and six, which are subsequently tested on a representative example in Section~\ref{sect.4}. 

{
	A major issue with expansions in a Lie algebra is that, due to the anti-symmetry of the commutator and to the Jacobi identity, the expansion possesses a great deal of redundancy. This redundancy can be further increased by an astute choice of a basis, which causes the grade of terms (that is, the power of the step size preceding them) to increase rapidly: clearly, terms of grade exceeding the desired order can be disposed with. Careful exploitation of these features, as well as of time symmetry of underlying approximations, leads to huge reduction in the required number of terms, hence in computational expense of a Magnus expansion \cite{iserles00lgm, munthekaas99cia}. Extending this reasoning to Cayley--Magnus expansion exhibits similarly beneficial effect but is technically more challenging. In Section~\ref{sect.3} we use calculus in a homogeneous algebra, {\em \`a la\/} \cite{iserles00otd}, to derive optimal integrators of this kind.
}

\setcounter{equation}{0}
\section{Approximating the exact solution by quadratures: Magnus integrators}
\label{sect.2}

The formal solution of the initial value problem \eqref{eq:1} at time $t=h$ can be obtained by applying Picard iteration, 
\begin{equation}\label{eq:SolIt}
  x(h)={\cal P}(h)x_0=
 \left(\sum_{k=0}^{\infty} I_k(h) \right)  x_0,
\end{equation}
where $I_0(h)=I$ (the identity matrix), and each term $I_k(h)$, $k \ge 1$, is given by the iterated integral
\begin{equation} \label{eq:ii_A}
  I_k(h)=\int_0^h dt_1\int_0^{t_1} dt_2\cdots \int_0^{t_{k-1}} dt_k \, A(t_1) A(t_2) \cdots A(t_k).
\end{equation}
In practice, however, one has to truncate the infinite series $\mathcal{P}(h)$ after, say, $p$ terms, and compute
\[
  x_{1} = \mathcal{P}^{[p]}(h) \, x_0, \qquad \mbox{ with } \qquad \mathcal{P}^{[p]}(h) = \sum_{k=0}^{p} I_k(h), 
\]
so that $x_{1} \approx x(h) = \mathcal{P}(h) x(0)$.  The main challenge is to compute efficiently $\mathcal{P}^{[p]}(h)$, or at least its order-$p$ approximation, given that $I_{p+1}(h) = \mathcal{O}(h^{p+1})$. A notable result, first established in \cite{iserles99ots}, shows that if one employs a quadrature rule of order $p = 2\ell$ with nodes $c_i$, $i = 1,\ldots,\mu$, then $\mathcal{P}^{[2\ell]}(h)$ can be approximated up to order $2\ell$ in $h$ using only the values of $A$ at the quadrature nodes, namely $A_j = A(c_j h)$, $j = 1,\ldots,\mu$. {
	This result has been subsequently proved in a beautiful manner in \cite{zanna98nsi}: this proof is very short, using a clever, indirect argument.} A complete and self-contained proof of this remarkable property is provided in Appendix~\ref{appendixA} 
	{
		highlighting in detail  salient features of the underlying quadrature}.

In line with the treatment carried out there, $\mathcal{P}^{[p=2 \ell]}(h)$ can also be expressed in terms of 
the $\ell$ functions 
\begin{equation} \label{al_qua}
   \alpha_{k+1}^{(2\ell)}(h)=h\sum_{j=1}^{\mu}d_{k,j}A_j, \qquad k = 0, 1, 2, \ldots, \ell -1
\end{equation}
for certain coefficients $d_{k,j}$, that furnish approximations to the derivatives of $A$ at the midpoint $t=h/2$, and satisfy $\alpha_{k+1}^{(2\ell)}(h)={\cal O}(h^{k+1})$. 
The explicit form of the relation \eqref{al_qua} for $\ell = 1,2,3$ is given in equations \eqref{g_or2}--\eqref{g_or6} for Gauss--Legendre quadratures, where $\mu=\ell$.

It turns out that working with these functions is particularly appropriate when designing numerical schemes \cite{blanes09tma}: the resulting
order $p=2 \ell$ method is expressed in terms of $\{\alpha_k^{(2\ell)}\}_{k=1}^{\ell}$, whereas practical implementation is done in terms
of quadratures, by taking into account the representation \eqref{al_qua}. 

Straightforward computation  shows in particular that (cf.\  Appendix~\ref{appendixA})
\begin{equation} \label{expre_Ps}
\begin{aligned}
 &    {\cal P}^{[2]}(h) = I + \alpha_1^{(2)}+ \frac12 \big(\alpha_1^{(2)}\big)^2 \\
 & {\cal P}^{[4]}(h) =  I + \alpha_1^{(4)}+ \frac12 \big(\alpha_1^{(4)}\big)^2 -\frac1{12} \big(\alpha_1^{(4)} \alpha_2^{(4)} -\alpha_2^{(4)} \alpha_1^{(4)} \big)+ \frac16 \big( \alpha_1^{(4)} \big)^3+ \frac1{4!} \big(\alpha_1^{(4)} \big)^4
\\
&  \qquad \qquad -\frac1{12} \Big[
\alpha_1^{(4)} \big(\alpha_1^{(4)} \alpha_2^{(4)} -\alpha_2^{(4)} \alpha_1^{(4)} \big)+ \big(\alpha_1^{(4)} \alpha_2^{(4)} -\alpha_2^{(4)} \alpha_1^{(4)} \big)\alpha_1^{(4)} 
\Big], 
 \end{aligned}
\end{equation} 
and ${\cal P}^{[2\ell]}(h)={\cal P}(h)+{\cal O}(h^{2\ell+1}).$

Although in principle the expressions of ${\cal P}^{[2 \ell]}(h)$ given by \eqref{expre_Ps} can be employed  to construct numerical approximations by expressing the 
functions $\alpha_k^{(2\ell)}$ in terms of the corresponding quadrature, i.e., by using \eqref{al_qua}, the resulting schemes present several disadvantages. In particular, being polynomial approximations, they display instabilities in problems associated with unbounded operators, and thus require a very small  time step $h$. Furthermore, when $A(t)$ evolves in a Lie algebra, the numerical solution does not evolve in the corresponding Lie group.

In this respect, Magnus integrators 
are particularly appealing, since they are formulated as matrix exponentials of elements belonging to the Lie algebra,
\[
   x_{n+1} = \e^{\Omega^{[2 \ell]}(h)} x_n.
\]   
In fact, the expression for $\Omega^{[2\ell]}(h)$ can be obtained directly from ${\cal P}^{[2 \ell]}(h)$ by requiring that
\begin{displaymath}
 \e^{\Omega^{[2 \ell]}(h)}={\cal P}^{[2 \ell]}(h) + {\cal O}(h^{2 \ell+1}), \qquad \mbox{ or } \qquad 
 \Omega^{[2 \ell]}(h)= \ln\left({\cal P}^{[2 \ell]}(h)\right)  + {\cal O}(h^{2\ell+1}),
\end{displaymath}
thus arriving at the following expressions for $\ell=1,2,3$ (see \cite{blanes09tma} for detailed treatment):
\begin{itemize}
	\item Order two: 
	\begin{equation} \label{Mor2} 
	\Omega^{[2]}(h)=\alpha_1^{(2)}.
	\end{equation}
	\item Order four: 
	\begin{equation} \label{Mor4}
	\Omega^{[4]}(h)=\alpha_1^{(4)} + \frac1{12}[\alpha_2^{(4)},\alpha_1^{(4)}].
	\end{equation}
	\item Order six: 
 \begin{equation} \label{Mor6}
   \begin{aligned}
	& \Omega^{[6]}(h) = \alpha_1^{(6)} + \frac1{12}\alpha_3^{(6)}  
	+ \frac1{12}[\alpha_2^{(6)} ,\alpha_1^{(6)} ] + \frac1{240}[\alpha_2^{(6)} ,\alpha_3^{(6)} ] \\
	& \qquad\qquad + \frac1{360}[\alpha_1^{(6)} ,[\alpha_1^{(6)} ,\alpha_3^{(6)} ]] 
	- \frac1{240}[\alpha_2^{(6)} ,[\alpha_1^{(6)} ,\alpha_2^{(6)} ]] \\
	& \qquad\qquad + \frac1{720}[\alpha_1^{(6)} ,[\alpha_1^{(6)} ,[\alpha_1^{(6)} ,\alpha_2^{(6)} ]]]. 
  \end{aligned}
\end{equation}
\end{itemize}
Whereas Magnus integrators involve commutators, other commutator-free approximations are possible. Thus, in particular,
\begin{equation} \label{cf_or4}
  x_{n+1}= \exp \left(\frac12\alpha_1^{(4)} +\frac16\alpha_2^{(4)}  \right) \, \exp \left(\frac12\alpha_1^{(4)} -\frac16\alpha_2^{(4)}  \right) \, x_n
\end{equation}
also constitutes a Lie-group integrator of order 4. Higher-order methods of this type have been presented and analysed in \cite{alvermann11hoc,blanes06fas,blanes17hoc}. {
	As said before, the actual methods are implemented by inserting the relation \eqref{al_qua} into \eqref{Mor2}-\eqref{cf_or4}}.

\setcounter{equation}{0}
\section{Modified Cayley--Magnus integrators}
\label{sect.3}

We are now in a position to construct rational methods of the form \eqref{eq:Cayley_m}. As with Magnus integrators, it is more convenient for that purpose to work
with the functions $\alpha_1^{(2\ell)}(h), \ldots, \alpha_{\ell}^{(2\ell)}(h)$ given in \eqref{al_qua}, so that the schemes read
\begin{equation} \label{prod_m}
\begin{aligned}
\Psi_h^{[2\ell]}= & \prod_{k=1}^m
\left[1-\frac12(w_{k,1}\alpha_1^{(2\ell)}+\cdots +w_{k,\ell}\alpha_{\ell}^{(2\ell)})\right]^{-1} \\
& \qquad \mbox{}\times \left[1+\frac12(w_{k,1}\alpha_1^{(2\ell)}+\cdots +w_{k,\ell}\alpha_{\ell}^{(2\ell)})\right].
\end{aligned}
\end{equation}
In the sequel, and for  simplicity of notation, we  often omit the superscript $(2\ell)$ from $\alpha_j^{(2\ell)}$ when there is no prospect of confusion with the order considered. In any case,
the coefficients $w_{k,j}$ will be determined by requiring that 
\begin{equation} \label{expo_oc}
 \Psi_h^{[2 \ell]} = {\cal P}^{[2 \ell]}(h)+{\cal O}(h^{2\ell+1}) = \e^{\Omega^{[2\ell]}(h)} + \mathcal{O}(h^{2 \ell +1}).
\end{equation}
In this respect, the analysis simplifies by taking into account the existing relationship between the exponential and Cayley maps, namely
\[
\Cay (x) = \frac{1+x/2}{1-x/2} = \e^{2\arctan(x/2)}=
\exp\!\left(x+\frac1{12}x^3+\frac1{80}x^5+\cdots\right), 
\]
for $x\in(-2,2)$. Then, clearly, for $k=1,\ldots, \ell$, we have
\begin{eqnarray} \label{cay_exp}
	\Cay \Big(\sum_{j=1}^{\ell} w_{k,j} \, \alpha_j(h) \Big) &\!\!\!=\!\!\!& \left(I-\frac12\sum_{j=1}^{\ell} w_{k,j} \, \alpha_j(h)\right)^{\!-1}\! \left(I+\frac12\sum_{j=1}^{\ell} w_{k,j}\, \alpha_j(h)\right) \\
	& \!\!\!=\!\!\! & \exp\!\left(\sum_{j=1}^{\ell} w_{k,j} \, \alpha_j(h) + \frac1{12}\Big(\sum_{j=1}^{\ell} w_{k,j} \, \alpha_j(h)\Big)^3+ {\cal O}(h^5)  \right)\!.  \nonumber
\end{eqnarray}
The main idea is to substitute \eqref{cay_exp} into the product \eqref{prod_m} defining the method $\Psi_h^{[2\ell]}$, subsequently combining the resulting product of exponentials into a single exponential using the Baker--Campbell--Hausdorff (BCH) formula. Finally, this expression is compared with $\Omega^{[2\ell]}(h)$ in line with \eqref{expo_oc}. This procedure yields a set of order conditions that must be satisfied by the coefficients $w_{k,j}$.

Since both the exact solution and the truncated Magnus expansion are time-symmetric, the following result provides a practical guideline to ensure that the method \eqref{prod_m} also preserves this property: the composition must be symmetric with respect to the elements with odd indices, $\alpha_{2j+1}(h)$, and skew-symmetric with respect to those with even indices, $\alpha_{2j}(h)$. This follows from the identities $\alpha_{2k-1}(-h) = -\alpha_{2k-1}(h)$ and $\alpha_{2k}(-h) = \alpha_{2k}(h)$, for $k = 1,2,\ldots$ (cf.\  Appendix~\ref{appendixA}).

 \begin{proposition} \label{thm:Cayle_symmetry}
	Let $\Psi(h)$ be a time-symmetric map, i.e.,  one that satisfies $\Psi(h)^{-1}=\Psi(-h)$. Then, the product
	\[
	\Phi(h)=\Cay \Big(\sum_{j=1}^{\ell} z_j \, \alpha_j(h)\Big) \cdot \Psi(h) \cdot \Cay \Big(\sum_{j=1}^{\ell} (-1)^{j+1}z_j \, \alpha_j(h)\Big),
	\]
	with $z_j \in \mathbb{R}$, is also time-symmetric: $\Phi(h)^{-1}=\Phi(-h)$.
\end{proposition}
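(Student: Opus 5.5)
The plan is to compute $\Phi(h)^{-1}$ and $\Phi(-h)$ directly and compare them. Two elementary facts about the Cayley map will be used, together with the parity identities $\alpha_{2k-1}(-h)=-\alpha_{2k-1}(h)$ and $\alpha_{2k}(-h)=\alpha_{2k}(h)$ recalled just before the statement (cf.\ Appendix~\ref{appendixA}).

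The first fact is that $\Cay(X)^{-1}=\Cay(-X)$ for any matrix $X$ with $I\pm\frac12 X$ invertible. Indeed, $(I-\frac12X)^{-1}$ and $(I+\frac12X)$ are both functions of $X$ and therefore commute. Hence
\[
\Cay(X)^{-1}=\left(I+\tfrac12X\right)^{-1}\left(I-\tfrac12X\right)=\Cay(-X).
\]
This is valid for $h$ small enough, since then $\alpha_j(h)=\mathcal{O}(h)$.

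Write $Z_\pm(h)=\sum_{j=1}^{\ell} (\pm1)^{j+1} z_j\,\alpha_j(h)$, so that $Z_+$ carries the coefficients $z_j$ and $Z_-$ carries $(-1)^{j+1}z_j$. The second step is to derive the key relation $Z_\pm(-h)=-Z_\mp(h)$ from the parity identities. For odd $j$ we have $\alpha_j(-h)=-\alpha_j(h)$, and the coefficient is the same in $Z_+$ and $Z_-$. For even $j$ we have $\alpha_j(-h)=\alpha_j(h)$, and the coefficient changes sign between $Z_+$ and $Z_-$. In both cases the $j$-th term of $Z_\pm(-h)$ equals minus the $j$-th term of $Z_\mp(h)$.

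Now $\Phi(h)=\Cay(Z_+(h))\,\Psi(h)\,\Cay(Z_-(h))$. Inverting the product and applying the first fact together with $\Psi(h)^{-1}=\Psi(-h)$ gives
\[
\Phi(h)^{-1}=\Cay(-Z_-(h))\,\Psi(-h)\,\Cay(-Z_+(h)).
\]
By the key relation, $-Z_-(h)=Z_+(-h)$ and $-Z_+(h)=Z_-(-h)$. Substituting these,
\[
\Phi(h)^{-1}=\Cay(Z_+(-h))\,\Psi(-h)\,\Cay(Z_-(-h))=\Phi(-h).
\]

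There is no real obstacle; the only point requiring care is the sign bookkeeping in the key relation. One must check that the reversal of the product order in the inverse matches exactly the exchange of $Z_+$ and $Z_-$.
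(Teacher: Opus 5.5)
Your proof is correct and is essentially the paper's argument: both use $\Cay(X)^{-1}=\Cay(-X)$ and the parity of the $\alpha_j$ to get $-Z_\pm(h)=Z_\mp(-h)$, then invert the product and use $\Psi(h)^{-1}=\Psi(-h)$, and your $Z_\pm$ notation simply makes explicit the companion identity for the second factor that the paper leaves implicit. (Your commutativity remark is harmless but unnecessary, since $(I+\frac12X)^{-1}(I-\frac12X)$ is $\Cay(-X)$ directly by definition.)
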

\begin{proof}
	It follows from the definition of the Cayley map and the symmetry properties of the functions $\alpha_k(h)$ that
	\[
	\Cay^{-1} \Big(\sum_{j=1}^{\ell} z_j \, \alpha_j(h)\Big) =
	\Cay \Big(-\sum_{j=1}^{\ell} z_j \, \alpha_j(h)\Big) =
	\Cay \Big(\sum_{j=1}^{\ell} (-1)^{j+1}z_j \, \alpha_j(-h)\Big).
	\]
Therefore
\begin{eqnarray*}
  \Phi(h)^{-1} &\!\!\!=\!\!\!& 
  \Cay^{-1} \Big(\sum_{j=1}^{\ell}(-1)^{j+1} z_j \, \alpha_j(h)\Big) \cdot \Psi(h)^{-1} \cdot  \Cay^{-1} \Big(\sum_{j=1}^{\ell} z_j \, \alpha_j(h)\Big)
  \\ &\!\!\!=\!\!\!& \Cay \Big(\sum_{j=1}^{\ell} z_j \, \alpha_j(-h)\Big) \cdot \Psi(-h) \cdot \Cay \Big(\sum_{j=1}^{\ell} (-1)^{j+1}z_j \, \alpha_j(-h)\Big)\\ 
  &\!\!\!=\!\!\!& \Phi(-h).
\end{eqnarray*}
\end{proof}

The procedure to construct methods of this class is best illustrated by considering the simplest 4th-order case.

\subsection{Time-symmetric 4th-order modified Cayley--Magnus integrators}

Only the two functions $\alpha_1^{(4)}(h)$ and $\alpha_2^{(4)}(h)$ are necessary to achieve order 4, whereby the Cayley transform \eqref{cay_exp}
reads explicitly
\begin{equation} \label{cayley_4o}
\begin{aligned}
 &  \Cay \big(w_{k,1}\alpha_1+ w_{k,2} \alpha_2 \big) = \Big[I-\frac12(w_{k,1} \alpha_1+ w_{k,2} \alpha_2)\Big]^{-1}  \Big[I+\frac12( w_{k,1} \alpha_1+ w_{k,2}\alpha_2)\Big] \\
  & \qquad\quad =  \exp\left(( w_{k,1} \alpha_1+ w_{k,2} \alpha_2) + \frac1{12}( w_{k,1} \alpha_1+ w_{k,2} \alpha_2)^3+ {\cal O}(h^5)  \right).
\end{aligned}
\end{equation}
Consistently with Proposition~\ref{thm:Cayle_symmetry}, the composition defining a time-symmetric 
method is of the form  
\begin{equation} \label{compo_cayley_4}
\begin{aligned}
 & \Psi_h^{[4]} =\Cay (w_{k,1} \, \alpha_1+ w_{k,2} \, \alpha_2)\cdots
\Cay (w_{1,1} \, \alpha_1+w_{1,2} \, \alpha_2) \\
& \qquad\quad \Cay (w_{1,1} \, \alpha_1-w_{1,2} \, \alpha_2)\cdots
\Cay (w_{k,1} \, \alpha_1-w_{k,2} \, \alpha_2)
\end{aligned}
\end{equation}
if $m=2k$ (even). We  adopt the short-hand notation
\[
 \Psi_h^{[4]}:  \Big( (w_{k,1},w_{k,2}) \, \cdots \, 
(w_{1,1},w_{1,2}) \, (w_{1,1},-w_{1,2}) \, \cdots  \, (w_{k,1},-w_{k,2})\Big).
\]
If, on the other hand, $m=2k-1$ (odd), then the following composition involves $2k-1$ parameters:
\[
  \Psi_h^{[4]}:  \Big( (w_{k,1},w_{k,2}) \, \cdots \, 
(w_{2,1},w_{2,2}) \, (w_{1,1},0) \, (w_{2,1},-w_{2,2}) \, \cdots \, (w_{k,1},-w_{k,2})\Big).
\]
Since the most general Lie algebra generated by symmetric Cayley maps up to order three contains only the elements
${\alpha_1, \alpha_1^3, [\alpha_2,\alpha_1]}$ \cite{iserles00otd}, the explicit construction of $\Psi_h^{[4]}$ in either case requires handling products of the form
\[
\e^{C_4^{[n+1]}} = 
\Cay (w_{n+1,1}\alpha_1+w_{n+1,2}\alpha_2)\cdot \e^{C_4^{[n]}} \cdot  \Cay (w_{n+1,1}\alpha_1-w_{n+1,1}\alpha_2), 
\]
where
\[
C_4^{[n]} = 
\gamma_1^{[n]} \alpha_1 +
\gamma_{3,1}^{[n]} \, [\alpha_2,\alpha_1] +
\gamma_{3,2}^{[n]} \, \alpha_1^3 + {\cal O}(h^5), \qquad n=0,1,2,\ldots
\]
Taking into account \eqref{cayley_4o}, successive application of the BCH formula leads to the recursive relation
\begin{eqnarray*}
  \gamma_1^{[n+1]} & \!\!\!=\!\!\! & \gamma_1^{[n]} +2w_{n+1,1} \\
  \gamma_{3,1}^{[n+1]} &\!\!\!=\!\!\! & \gamma_{3,1}^{[n]} +\big(w_{n+1,1} + \gamma_1^{[n]} \big)w_{n+1,2} \\
  \gamma_{3,2}^{[n+1]} & \!\!\!=\!\!\! & \gamma_{3,2}^{[n]} +\frac16 w_{n+1,1}^3,
\end{eqnarray*}
which has to be initialised, for a composition with $2k$ maps, with
\[
  \gamma_1^{[0]} =
\gamma_{3,1}^{[0]}=
\gamma_{3,2}^{[0]}=0 
\]
and, for a composition with $2k-1$ maps, with
\[
\gamma_1^{[0]} = w_{1,1}, \qquad
\gamma_{3,1}^{[0]}=0, \qquad
\gamma_{3,2}^{[0]}= \frac1{12} w_{1,1}^3 .
\]
Now, according with \eqref{expo_oc}, the composition \eqref{compo_cayley_4} must reproduce the truncated Magnus expansion \eqref{Mor4} up to order 3
(due to time-symmetry), and this can be furnished by a composition involving only three Cayley maps,
\begin{displaymath} 
\Psi^{[4]}_h = \Big( (w_{2,1},w_{2,2}) \, (w_{1,1}, 0) \, (w_{2,1},-w_{2,2}) \Big)
\end{displaymath}
as long as the parameters $w_{k,j}$ satisfy the equations
\begin{eqnarray*} 
	&  & 2 w_{2,1}+ w_{1,1}=1  \\
	& & (w_{1,1} + w_{2,1}) w_{2,2}=\frac1{12} \\
	 &  & 2 w_{2,1}^3 +w_{1,1}^3=0, 
\end{eqnarray*}
whose unique real solution is given by
\begin{equation} \label{c_order_43a}
 \displaystyle  
 w_{2,1}=\frac1{2-2^{1/3}}, \qquad w_{1,1}=1-2 w_{2,1}, \qquad
 w_{2,2}=\frac1{12(1-w_{2,1})}.
\end{equation}
As a matter of fact, this corresponds to the method proposed in \cite{maslovskaya26cfc}, and is denoted as $\mathrm{Cay}_34$ in Table \ref{tab:cayley_magnus_methods}. 

Several improvements can be considered, however. In particular, we can incorporate more Cayley maps into the composition, and therefore more parameters,
to try to optimise the leading error term or we can apply higher order quadrature rules.

Consider, for instance, a time-symmetric composition with five Cayley maps instead of the three previously analysed\footnote{A composition of four Cayley maps leads to a system of order conditions with no real solutions.},
\begin{displaymath}
\tilde{\Psi}^{[4]}_h = 
\Big( (w_{3,1}, w_{3,2}) \, (w_{2,1},w_{2,2}) \, (w_{1,1},0) \, (w_{2,1},-w_{2,2}) \, (w_{3,1},-w_{3,2})\Big).
\end{displaymath}
In that case, the order conditions to achieve order four read
\begin{eqnarray*} 
	&  & 2 w_{3,1}+2 w_{2,1}+ w_{1,1}=1 \\
	& & (w_{1,1} + w_{2,1}) w_{2,2} + (w_{1,1} + 2 w_{2,1}+ w_{3,1}) w_{3,2}=\frac1{12} \\
	&  & 2 w_{3,1}^3 +2 w_{2,1}^3 +w_{1,1}^3=0,
\end{eqnarray*}
with two free parameters, allowing for different possibilities. Thus, we
can take, for example, $w_{2,1}=w_{3,1}$ and try to minimise some of the terms at order five. 
The following solution has displayed good performance:
\[
\displaystyle  
w_{3,1}=w_{2,1}=\frac1{4-4^{1/3}}, \qquad w_{1,1}=1-4 w_{2,1},
\]
\[
\displaystyle  
w_{3,2}=\frac7{240(1-2 w_{2,1})}, \qquad 
w_{2,2}=\frac{1-12(1-w_{2,1})w_{3,2}}{12(1-3 w_{2,1})}.
\]
The resulting method is denoted  by 
$\mathrm{Cay}_54$ in Table \ref{tab:cayley_magnus_methods}.

\begin{table}[!htbp]
\centering
\small
\setlength{\tabcolsep}{4pt}
\renewcommand{\arraystretch}{1.2}

\begin{tabularx}{\linewidth}{@{}>{\raggedright\arraybackslash}p{0.16\linewidth}X@{}}
\toprule
\textbf{Method} & \textbf{Composition and coefficients} \\
\midrule

\(\mathrm{Cay}_{3}4\):
&
\(\Bigl((w_{2,1},w_{2,2})\,(w_{1,1},0)\,(w_{2,1},-w_{2,2})\Bigr)\)
\\[-7pt]
&
\[
\begin{aligned}
w_{2,1} &= \frac{1}{2-2^{1/3}}, & 
w_{1,1} &= 1-2w_{2,1}, & 
w_{2,2} &= \frac{1}{12(1-w_{2,1})}.
\end{aligned}
\]
\\[-10pt]

\(\mathrm{Cay}_{5}4\):
&
\(\Bigl((w_{3,1},w_{3,2})\,(w_{2,1},w_{2,2})\,(w_{1,1},0)\,
(w_{2,1},-w_{2,2})\,(w_{3,1},-w_{3,2})\Bigr)\)
\\[-12pt]
&
\[
\begin{aligned}
w_{3,1}=w_{2,1} &= \frac{1}{4-4^{1/3}}, &
w_{1,1} &= 1-4w_{2,1}, \\
w_{3,2} &=\frac7{240(1-2 w_{2,1})}, &
w_{2,2}&=\frac{1-12(1-w_{2,1})w_{3,2}}{12(1-3 w_{2,1})}.
\end{aligned}
\]
\\[-10pt]

\(\mathrm{Cay}_{7}4\):
&
\(\Bigl((w_{4,1},w_{4,2},w_{4,3})\,\cdots\,
(w_{1,1},0,w_{1,3})\,\cdots\,
(w_{4,1},-w_{4,2},w_{4,3})\Bigr)\)
\\[-14pt]
&
\[
\begin{aligned}
w_{1,1} &=  0.9436189826258903,  &
w_{2,1} &= -0.8341605550808652, \\
w_{3,1} &=  0.43117553188396,    &
w_{4,1} &=  w_{3,1}, \\
w_{1,2} &=  0,                   &
w_{1,3} &=  0.884982196784669, \\
w_{2,2} &=  0.06389979531412822, &
w_{2,3} &= -0.6265465634394808, \\
w_{3,2} &=  0.08835088703663657, &
w_{3,3} &=  0.1707144543780912, \\
w_{4,2} &=  0.17979588264059018, &
w_{4,3} &=  0.055007677335721684.
\end{aligned}
\]
\\[-10pt]

\(\mathrm{Cay}_{13}6\):
&
\(\Bigl((w_{7,1},w_{7,2},w_{7,3})\,\cdots\,
(w_{1,1},0,w_{1,3})\,\cdots\,
(w_{7,1},-w_{7,2},w_{7,3})\Bigr)\)
\\[-16pt]
&
\[
\begin{aligned}
w_{1,1} &= -0.6274523445492189,    &
w_{2,1} &=  0.5850565174736707, \\
w_{3,1} &= -0.45967745375388464,   &
w_{4,1} &=  0.172086777138706, \\
w_{5,1} &=  w_{4,1},               &
w_{6,1} &=  w_{4,1}, \\
w_{7,1} &=  w_{4,1},               & \\[-0.4em]
w_{1,2} &=  0,                     &
w_{1,3} &=  0.004329477802178489, \\
w_{2,2} &= -0.0063913535826220485, &
w_{2,3} &= -0.04429205088886197, \\
w_{3,2} &= -0.07233744752005296,   &
w_{3,3} &=  0.06509491660750541, \\
w_{4,2} &= -0.082715747715483,     &
w_{4,3} &= -0.03516880921224163, \\
w_{5,2} &=  0.0052328434008880416, &
w_{5,3} &=  \tfrac{1}{35}, \\
w_{6,2} &=  0.0049981606172231335, &
w_{6,3} &= -\tfrac{1}{55}, \\
w_{7,2} &=  \tfrac{1}{12},          &
w_{7,3} &=  \tfrac{1}{23}.
\end{aligned}
\]
\\[-4pt]

\bottomrule
\end{tabularx}
\caption{Modified Cayley--Magnus methods $\mathrm{Cay}_sp$ of orders four and six considered in the present work. The subindex $s$ refers to the number of
Cayley transforms involved, whereas $p$ denotes the order.}
\label{tab:cayley_magnus_methods}

\end{table}


\subsection{Time-symmetric 6th-order modified Cayley--Magnus integrators}

In principle, a similar procedure can be extended to higher orders; however, significant technical difficulties arise due to the rapidly increasing number of order conditions. It is therefore preferable to adopt a systematic approach that accounts for the dimensional structure, as well as the various combinations of the functions $\alpha_j$ involved,  consequently determining the number of Cayley maps required to achieve the desired order. This can be accomplished by observing that the quadratic Lie algebra $o_N(\mathbb{C},J)$ defined in \eqref{alg1} is, in fact, a hierarchical Lie algebra \cite{iserles00otd}.

Let us consider the generators $X_1,X_2,\ldots,X_m$ of a hierarchical algebra $\mathfrak{g}$, with  $X_i$ of grade $i$, and the  product $[[\cdots]]_m$ 
defined as
\[
[[X_{i_1},X_{i_2},\ldots,X_{i_m}]]_m=
X_{i_1},X_{i_2},\ldots,X_{i_m}-(-1)^{m}X_{i_m},\ldots,X_{i_2},X_{i_1}.
\]
This product obeys the alternating symmetry, multilinearity and hierarchy conditions  \cite{iserles00otd}. Thus, $\mathfrak{g}$ contains the terms $X_i$ and all possible combinations by means of each $m$-nary operation. This is a hierarchical algebra which is unique up to an isomorphism. 

In \cite{iserles00otd} it is shown how to compute the dimension of the algebra at each grade and how to obtain a basis. Thus, let us assume that we have the
generators $X_1,X_2,X_3$, with each $X_i$ of grade $i$. Then, a basis of the graded algebra up to order five is given by 
\[
\begin{aligned}
&	{\cal B}^3_1  =  \{ X_1\};   \\
&	{\cal B}^3_2  =  \{ X_2\};   \\
&	{\cal B}^3_3  =  \{ X_3,[[X_1,X_2]]_2,[[X_1,X_1,X_1]]_3\};   \\
&	{\cal B}^3_4  =  \{ [[X_1,X_3]]_2,[[X_1,X_1,X_2]]_3,[[X_1,X_2,X_1]]_3\};   \\
&	{\cal B}^3_5  =  \{ [[X_2,X_3]]_2,[[X_1,X_2,X_2]]_3,[[X_1,X_1,X_3]]_3,[[X_2,X_1,X_2]]_3,[[X_1,X_3,X_1]]_3,\!\!  \\
	&  \qquad\quad  [[X_1,X_1,X_1,X_2]]_4, [[X_1,X_1,X_2,X_1]]_4,[[X_1,X_1,X_1,X_1,X_1]]_5\}.
\end{aligned}
\]
Note that, since $\alpha_i^{(6)}(h) = \mathcal{O}(h^i)$, we may take $X_i=\alpha_i^{(6)}(h)$, $i=1,2,3$, identify an appropriate basis and therefore form 
an independent set of order conditions for a method of order six. Other bases are more convenient for our analysis, however. Thus, in particular, we choose
\begin{eqnarray*}
	{\cal B}^3_1 & = & \{ E_{1,1}=\alpha_1\};   \\
	{\cal B}^3_2 & = & \{ E_{2,1}=\alpha_2\};   \\
	{\cal B}^3_3 & = & \{ E_{3,1}=\alpha_3, \ E_{3,2}=[\alpha_2,\alpha_1], \ E_{3,3}=\alpha_1^3\};   \\
	{\cal B}^3_4 & = & \{ E_{4,1}=[\alpha_1,\alpha_3], \ E_{4,2}=[\alpha_1,[\alpha_1,\alpha_2]], \
	E_{4,3}=(\alpha_1^2\alpha_2+\alpha_1\alpha_2\alpha_1+\alpha_2\alpha_1^2)\};   \\
	{\cal B}^3_5 & = & \{ E_{5,1}=[\alpha_2,\alpha_3]], \
	E_{5,2}=[\alpha_1,[\alpha_1,\alpha_3]],  \
	E_{5,3}=[\alpha_2,[\alpha_1,\alpha_2]], \\
	&  &  
\ \, E_{5,4}=[\alpha_1,[\alpha_1,[\alpha_1,\alpha_2]]], \ 
E_{5,5}=\alpha_1^5,\ E_{5,6}=(\alpha_1\alpha_2^2+\alpha_2\alpha_1\alpha_2+\alpha_2^2\alpha_1), \	\\
	&  &   \ \, E_{5,7}=(\alpha_1^2\alpha_3+\alpha_1\alpha_3\alpha_1+\alpha_3\alpha_1^2), \ E_{5,8}=[E_{1,1},E_{4,3}] = [\alpha_1^3,\alpha_2]\}. \
\end{eqnarray*}

Equipped with this information, we can proceed now to derive 6th-order time-symmetric schemes. We have to deal with products of the form
\begin{equation} \label{cay_zass}
\e^{C_6^{[n+1]}} = 
\Cay (x_n\alpha_1+y_n\alpha_2+z_n\alpha_3)\, \e^{C_6^{[n]}}  \Cay (x_n\alpha_1-y_n\alpha_2+z_n\alpha_3)
\end{equation}
to construct a time-symmetric composition, with
\[
 C_6^{[n]} = \gamma_{1,1}^{[n]} E_{1,1} + \sum_{j=1}^3 \gamma_{3,j}^{[n]} E_{3,j} + \sum_{j=1}^8 \gamma_{5,j}^{[n]} E_{8,j}.
\]
Note that $C_6^{[n]}$ can  be written as $C_6^{[n]} = C_{6,1}^{[n]} + C_{6,2}^{[n]}$, where
\[
 \begin{aligned}
   C_{6,1}^{[n]} & = \gamma_{1,1}^{[n]} E_{1,1} + \gamma_{3,1}^{[n]} E_{3,1} 
	+ \gamma_{3,2}^{[n]} E_{3,2}+ \gamma_{5,1}^{[n]} E_{5,1} + \gamma_{5,2}^{[n]} E_{5,2} 
	+ \gamma_{5,3}^{[n]} E_{5,3} + \gamma_{5,4}^{[n]} E_{5,4}, \\
  C_{6,2}^{[n]} & = \gamma_{3,3}^{[n]} E_{3,3} + \gamma_{5,5}^{[n]} E_{5,5} + 
	\gamma_{5,6}^{[n]} E_{5,6} + \gamma_{5,7}^{[n]} E_{5,7} + \gamma_{5,8}^{[n]} E_{5,8}.
\end{aligned}
\]
This decomposition is particularly convenient for deriving the order conditions, since $C_{6,1}^{[n]}$ contains exactly the terms appearing in the Magnus expansion up to order 6 (see \eqref{Mor6}). Consequently, $C_{6,2}^{[n]}$ can be interpreted as the additional contribution arising from the use of Cayley transforms instead of exponentials of the form $\exp\big(\sum_{j=1}^3 w_{k,j} \alpha_j\big)$ in the composition, as is done in commutator-free exponential integrators \cite{blanes06fas}.

Computing explicitly the product \eqref{cay_zass} up to order 6, we obtain the values
\allowdisplaybreaks
\begin{align*}
&	\gamma_{1,1}^{[n+1]}  =  \gamma_{1,1}^{[n]} +2x_n, \\
&	\gamma_{3,1}^{[n+1]}  =  \gamma_{3,1}^{[n]} +2z_n, \\
&	\gamma_{3,2}^{[n+1]}  =  \gamma_{3,2}^{[n]} -\frac12(\gamma_{1,1}^{[n+1]} + \gamma_{1,1}^{[n]}) \, y_n, \\
&	\gamma_{3,3}^{[n+1]}  =  \gamma_{3,3}^{[n]} +\frac16x_n^3, \\
&	\gamma_{5,1}^{[n+1]}  =  \gamma_{5,1}^{[n]} +\frac12(\gamma_{3,1}^{[n+1]} + \gamma_{3,1}^{[n]}) \, y_n,  \\
&	\gamma_{5,2}^{[n+1]}  =  \gamma_{5,2}^{[n]} + \frac1{12}(x_n-\gamma_{1,1}^{[n]})(\gamma_{3,1}^{[n]}x_n-\gamma_{1,1}^{[n]}z_n) \\
& \qquad -
	\frac1{12}(x_n+\gamma_{1,1}^{[n+1]})(\gamma_{3,1}^{[n+1]}x_n-\gamma_{1,1}^{[n+1]}z_n), \\
&	\gamma_{5,3}^{[n+1]}  =  \gamma_{5,3}^{[n]} +\frac12(\gamma_{3,2}^{[n+1]} + \gamma_{3,2}^{[n]})y_n+\frac1{12}(\gamma_{1,1}^{[n+1]} - \gamma_1^{[n]})y_n^2 ,\\
&	\gamma_{5,4}^{[n+1]}  =  \gamma_{5,4}^{[n]}  +\frac1{12}((x_n-\gamma_{1,1}^{[n]})\gamma_{3,2}^{[n]}x_n-(\gamma_{1,1}^{[n+1]}+x_n)\gamma_{3,2}^{[n]}x_n) \\
& \qquad +
	\frac1{24}((\gamma_1^{[n]})^2-\gamma_1^{[n+1]})^2))x_ny_n,\\
&	\gamma_{5,5}^{[n+1]}  =  \gamma_{5,5}^{[n]}  +\frac1{40}x_n^5, \\
&	\gamma_{5,6}^{[n+1]}  =  \gamma_{5,6}^{[n]} + \frac16x_ny_n^2,\\
&	\gamma_{5,7}^{[n+1]}  =  \gamma_{5,7}^{[n]} + \frac16x_n^2z_n,\\
&	\gamma_{5,8}^{[n+1]}  =  \gamma_{5,8}^{[n]} -\frac12 (\gamma_{3,3}^{[n+1]} + \gamma_{3,3}^{[n]}) y_n-\frac1{24} (\gamma_{1,1}^{[n+1]} + \gamma_{1,1}^{[n]}) x_n^2y_n,
\end{align*}
to be initiated, for a composition with $2k$ Cayley maps, with 
\[
\gamma_{i,j}^{[0]}=0, \qquad \mbox{ for all }  \ i,j.
\]

Once we are dealing instead with a composition with $2k-1$ Cayley maps,  the recurrence is initialized for $n=1$ by
\begin{eqnarray*}
	& & 
	\gamma_1^{[1]} = x_{1}, \qquad
	\gamma_{3,1}^{[1]}= z_{1}, \qquad
	\gamma_{3,2}^{[1]}=0, \qquad
	\gamma_{3,3}^{[1]}= \frac1{12} x_{1}^3,  \\
	& & 
	\gamma_{5,1}^{[1]} =0, \qquad
	\gamma_{5,2}^{[1]}= 0, \qquad
	\gamma_{5,3}^{[1]}=0, \qquad
	\gamma_{5,4}^{[1]}= 0,  \\
	& & 
	\gamma_{5,5}^{[1]} = \frac1{80} x_{1}^5, \qquad
	\gamma_{5,6}^{[1]}= 0, \qquad
	\gamma_{5,7}^{[1]}= \frac1{12} x_{1}^2z_{1}, \qquad
	\gamma_{5,8}^{[1]}=0. 
\end{eqnarray*}

In either case, the order conditions are obtained by taking $C_6^{[k]} = \Omega^{[6]}(h)$ as given by \eqref{Mor6}, or explicitly
\[
\begin{aligned}
& \gamma_{1,1}^{[k]} = 1, \
\gamma_{3,1}^{[k]} = \frac1{12}, \
\gamma_{3,2}^{[k]} = \frac1{12}, \
\gamma_{5,1}^{[k]}  = \frac1{240}, \
\gamma_{5,2}^{[k]} = \frac1{360}, \
\gamma_{5,3}^{[k]} = -\frac1{240},  \\
& \gamma_{5,4}^{[k]} = \frac1{720}, \; \gamma_{3,3}^{[k]} = 0, \; \gamma_{5,5}^{[k]} = 0, \; \gamma_{5,6}^{[k]} = 0, \; \gamma_{5,7}^{[k]} = 0, \; \gamma_{5,8}^{[k]} = 0,
\end{aligned}
\]
for a total of 12 polynomial equations in the coefficients of the composition. In consequence, at least eight Cayley maps are required to obtain a number of free parameters equal to the number of order conditions. This should be compared with the minimum of five exponentials acting on vectors needed in standard commutator-free methods of order six. {
	However, the conditions associated to $\gamma_{1,1}^{[4]},\gamma_{3,3}^{[4]},\gamma_{5,5}^{[4]}$
	\[
	\gamma_{1,1}^{[4]}=2\sum_{j=1}^4x_j=1, \qquad \gamma_{3,3}^{[4]}=\frac16\sum_{j=1}^4x_j^3=0, \qquad \gamma_{5,5}^{[4]}=\frac1{40}\sum_{j=1}^4x_j^5=0
	\]
	admit no real solutions, and therefore additional maps must be introduced.}

In practice, we have observed that, for problems evolving smoothly in time, the dominant error contribution arises from the term $\alpha_1^{7}$, which is multiplied by the factor 
$2\sum_{j=1}^k w_{j,1}^7$ or $w_{1,1}^7 + 2\sum_{j=2}^k w_{j,1}^7$, 
depending on whether the composition has an even or odd number of stages, respectively.

For this reason, it has proven convenient to optimise the coefficients $w_{j,1}$ so that they satisfy the order conditions associated with $\gamma_{1,1}^{[n]}$, $\gamma_{3,3}^{[n]}$, and $\gamma_{5,5}^{[n]}$, while simultaneously minimising or eliminating the coefficients of $\alpha_1^{7}$ and, possibly, $\alpha_1^{9}$. Once the coefficients $w_{j,1}$ are fixed, the remaining coefficients $w_{j,2}$ and $w_{j,3}$ must satisfy the nine remaining equations. Notably, five of these equations depend solely on the coefficients $w_{j,2}$.

We have analysed compositions involving up to 13 Cayley maps, and the most effective scheme we have identified consists precisely of 13 maps,
\begin{displaymath}
	\Big((w_{7,1}, w_{7,2}, w_{7,3}) \, \cdots \,  (w_{1,1}, 0, w_{1,3}) \,  \cdots \, 
	(w_{7,1}, -w_{7,2}, w_{7,3}) \Big),
\end{displaymath}
when the coefficients $w_{j,1}$ are chosen such that $w_{1,1}^7 + 2\sum_{j=2}^k w_{j,1}^7 = 0$
and minimise the quantity $w_{1,1}^9 + 2\sum_{j=2}^7 w_{j,1}^9$. The coefficients of this method, denoted by $\mathrm{Cay}_{13}6$ are collected in
Table \ref{tab:cayley_magnus_methods}.

Alternatively, by incorporating more Cayley maps than strictly required and/or using higher order quadrature rules, the additional free parameters can be exploited to construct optimised fourth-order integrators. Thus, the method denoted by $\mathrm{Cay}_74$ in Table~\ref{tab:cayley_magnus_methods}, which involves only seven Cayley maps, is of order
 four, while using a sixth-order quadrature rule so that  all order conditions up to order 5 are satisfied, except for those associated with $E_{5,3}$ and $E_{5,6}$.

\setcounter{equation}{0}
\section{Numerical example: A high-dimensional Rosen--Zener model}
\label{sect.4}

To illustrate the  schemes presented in this paper in practice, we consider next 
a  high-dimensional generalisation of the Rosen--Zenner model for a two-level quantum system  \cite{rosen32dsg}, which is incidentally
closely related to the system studied in \cite{bonhomme23anf,kyoseva07pro}.
The corresponding Schr{\"o}dinger equation  for the evolution operator in the interaction picture is of the form
\begin{equation} \label{r-z.1}
		\begin{cases}
			U'(t) = 
			- \, i \, H(t) \, U(t)\,, \qquad t \in (t_0, T)\,, \\
			U(t_0) = I,\,
		\end{cases}
\end{equation}
	where the time-dependent Hamiltonian reads, after normalisation,
\begin{equation} \label{r-z.2}
		\begin{gathered}
			H(t) = f_1(t) \, \sigma_3 \otimes I  + f_2(t) \, \sigma_1 \otimes R  \in \mathbb{C}^{d \times d}\,, \qquad d = 2 \, k\,. \\
		\end{gathered}
\end{equation}
Here $I \in \R^{k \times k}$ is the identity matrix, 
\[
	\sigma_1 = \begin{pmatrix} 0 & 1 \\ 1 & 0 \end{pmatrix}\,, \qquad 
	\sigma_2 = \begin{pmatrix} 0 &  - \, i \\ i  & 0 \end{pmatrix}\,, \qquad 
	\sigma_3 = \begin{pmatrix} 1 &  0 \\ 0  & -1 \end{pmatrix}\,,
\]
are  Pauli matrices, and $R = \mbox{tridiag}\big(1, 0, 1\big) \in \R^{k \times k}$. Alternatively,  we can write the Hamiltonian in the form
\begin{displaymath}
	H(t) = \begin{pmatrix} f_1(t) I & f_2(t) R \\ f_2(t) R &  -f_1(t) I \end{pmatrix}.
\end{displaymath}

Before selecting specific parameter values and performing numerical tests, it is reasonable to outline some preliminary considerations with respect to the
application of the previous modified Cayley--Magnus integrators for solving this problem.
These methods require to solve at each stage a linear system with the following structure,
\begin{equation}\label{eq:10}
	(I-hH_j)u = \begin{pmatrix} (1-a_j) I & b_j R \\ b_j R & (1+a_j) I \end{pmatrix} 
	\begin{pmatrix} u_1 \\ u_2  \end{pmatrix} = 
	\begin{pmatrix} d_1  \\ d_2  \end{pmatrix}, 
\end{equation}
where 
\[
a_j=h\sum_{i=1}^{\ell} x_{j,i}f_1(t_n+c_ih), \qquad\quad
b_j=-h\sum_{i=1}^{\ell} x_{j,i}f_2(t_n+c_ih), 
\]
and appropriate coefficients $x_{j,i}$ from each method. To solve \eqref{eq:10} we can proceed as follows: first we solve the system
\[
((1- a_j^2)I- b_j^2R^2)u_2=v, \qquad v=(1-a_j)d_2 - b_j R d_1.
\]


The vector $v$ can be computed using $2k$ multiplications and $2k$ additions (i.e., $4k$ flops). Here, $1 - a_k$ can be precomputed, and the action of the matrix $R$ on a vector involves only $k$ additions. The associated linear system can then be solved using a variant of the Thomas algorithm for tridiagonal systems with a cost of $6k$ flops.

Note that $R^2$ is pentadiagonal, although only three of its diagonals are nonzero. Finally, the quantity
\[
u_1 = \frac1{1-a_j}(d_1 - b_j R u_2)
\]
is computed with $4k$ flops, resulting in a total cost of $14k$ flops.
Moreover, taking into account that
\begin{equation}\label{eq:Cay_12}
\Cay(A)u = (I - A/2)^{-1}(I + A/2)u = 2(I - A/2)^{-1}u - u,
\end{equation}
the cost of applying a Cayley map amounts to adding $k$ multiplications and $k$ additions to the cost of solving the linear system, leading to a total of $16k$ flops.
For comparison, a single matrix--vector product $H_j u$ requires $4k$ multiplications and $4k$ additions, i.e., $8k$ flops. Thus, a Cayley map has a computational cost comparable to two matrix--vector products, while additionally preserving unitarity (note that computing the full matrix $\Cay(A)$ bears the cost of ${\cal O}(k^2)$ flops).

For our numerical experiments, we take
\[
		f_1(t) = V_0 \frac{\cos(\omega \, t)}{\cosh\big(\tfrac{t}{T_0}\big)}, \qquad\qquad 
		f_2(t) = - \, V_0 \frac{\sin(\omega \, t)}{\cosh\big(\tfrac{t}{T_0}\big)}
\]
with $k = 50$, $T_0 = 1$. In addition, to assess the influence of $V_0$ (and thus the norm of the matrix) and $\omega$ (which governs the contribution of time derivatives) on the relative performance of the methods, we consider the following parameter values:
\[
 \mbox{ (a) } \, V_0 = 10, \ \omega = 5; \qquad \mbox{ (b) } \, V_0 = 10, \ \omega = 10; \qquad \mbox{ (c) } \, V_0 = 20, \ \omega = 5.
\]
We integrate equation~\eqref{r-z.1} from $t_0 = -4T_0$ up to the final time $t_f = 4T_0$, and compute numerical approximations $U_{\text{app}}(t_f,t_0)$ at $t = t_f$ for various time step sizes. A reference solution, $U_{\text{ref}}(t_f,t_0)$, is obtained numerically with high accuracy. In practice, however, rather than computing the full fundamental matrix, one typically evaluates its action on a vector (corresponding to the initial condition).

We first compare the relative performance of the modified Cayley--Magnus methods listed in Table~\ref{tab:cayley_magnus_methods}, together with the second-order Cayley method \eqref{eq:Cay_12}, denoted by Cay$_12$. The corresponding results are displayed in Figure~\ref{fig00}.
The superiority of the new methods proposed in this work is clearly evident. It is also worth noting that the second-order Cayley method Cay$_12$
is competitive only at relatively low accuracy requirements and when the norm of the matrix is moderate. As the norm increases, or higher accuracies are sought, the new modified Cayley--Magnus methods consistently outperform the second-order scheme.

\begin{figure}[h!]
	\begin{center}
		\includegraphics[scale=0.54]{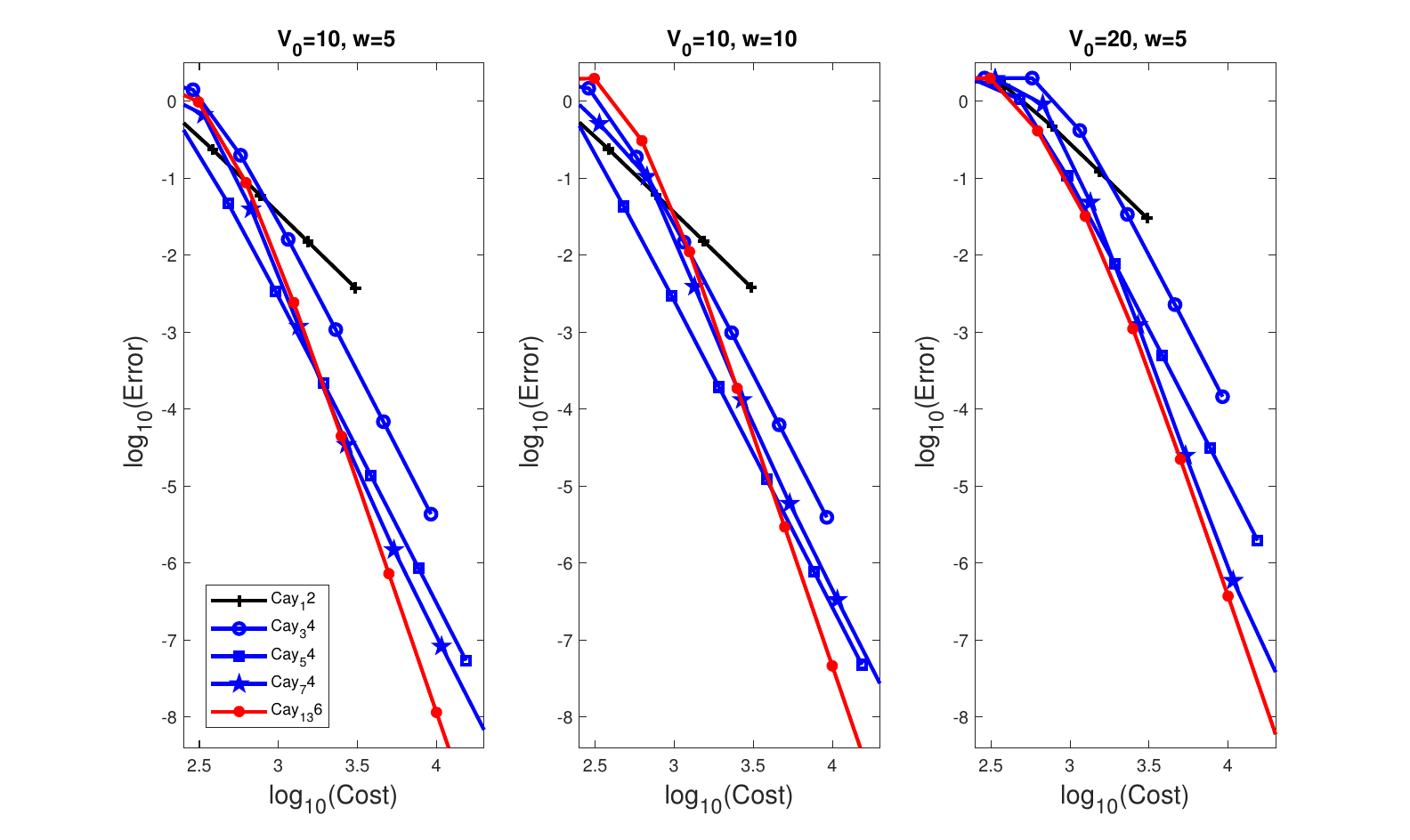}	
	\end{center}
	\caption{{Two-norm error in the fundamental matrix solution of \eqref{r-z.1} at the final time for different $m$-stage modified Cayley--Magnus methods of order $p$, Cay$_mp$, versus the computational cost (measured as the number of Cayley maps) for different choices of $V_0$ and $\omega$.	}}
	\label{fig00}
\end{figure}

We also compare the modified Cayley--Magnus schemes with other classes of Lie group integrators, namely Runge--Kutta--Gauss--Legendre (RKGL) methods and exponential Magnus integrators. In this context, it is important to note that the computational cost of different (exponential and rational) Lie group integrators is highly problem dependent. 

For instance, an RKGL method of order $p = 2s$ requires solving linear systems of dimension $sd \times sd$. For dense matrices, this results in a computational cost approximately $s^3$ times larger. However, for sparse matrices, the additional cost can be significantly higher due to the loss of structure. In such cases, we assume a cost equivalent to $2s^3$ times that of solving a linear system of dimension $k$, which corresponds to the cost of a single Cayley map. Accordingly, their cost is estimated as equivalent to 16 and 54 Cayley maps per step for orders four and six, respectively.

On the other hand, the computational cost of exponential Magnus integrators is also highly problem-dependent and influenced by the desired level of accuracy. The action of the exponentials must be evaluated up to round-off error, or at least to an accuracy higher than the order of the method. 
The cost of computing the action of the exponential $\e^{hA}$ on a vector $v$, i.e., $\e^{hA}v$, depends on the norm $\|hA\|$. While we wish to employ relatively large time steps once low accuracy is required, this leads to an increase in $\|hA\|$, and consequently, to a higher computational cost for evaluating the exponential action. 

For the numerical experiments considered here, we assume that the computational cost of one full step of exponential integrators (including both Magnus and commutator-free methods) is comparable to that of RKGL methods of the same order.

In our tests, we consider the modified Cayley--Magnus methods $\mathrm{Cay}_34$, $\mathrm{Cay}_54$, and $\mathrm{Cay}_74$ (order 4) listed in Table~\ref{tab:cayley_magnus_methods}. These are compared with the fourth-order exponential Magnus method~\eqref{Mor4} (Mag4), the two-exponential commutator-free Magnus scheme~\eqref{cf_or4} (CF4), and the implicit RKGL method of order 4 (denoted by RKGL4). The computational cost of all these Lie group integrators is estimated to be approximately equivalent to 16 Cayley maps, which is approximately twice the cost of $\mathrm{Cay}_{7}4$.

For order 6, we compare the modified Cayley--Magnus method $\mathrm{Cay}_{13}6$ with the exponential Magnus method~\eqref{Mor6} (Mag6), the six-exponential commutator-free Magnus method (CF6) \cite{blanes06fas}, and the implicit RKGL method (RKGL6). In this case, the computational cost of all methods is taken to be equivalent to 54 Cayley maps, which is roughly four times the cost of $\mathrm{Cay}_{13}6$.

In all cases, we employ Gauss--Legendre quadrature rules of order four and six, respectively, corresponding to 2 and 3 evaluations of the time-dependent matrix per step.

The results are presented in Figure~\ref{fig0}. We observe that the optimised Cayley--Magnus methods with 5 and 7 maps 
outperform the exponential integrators and are significantly more efficient than the RKGL4 method. A similar behaviour is observed for the sixth-order Cayley--Magnus method.

\begin{figure}[h!]
	\begin{center}
		\includegraphics[scale=0.62]{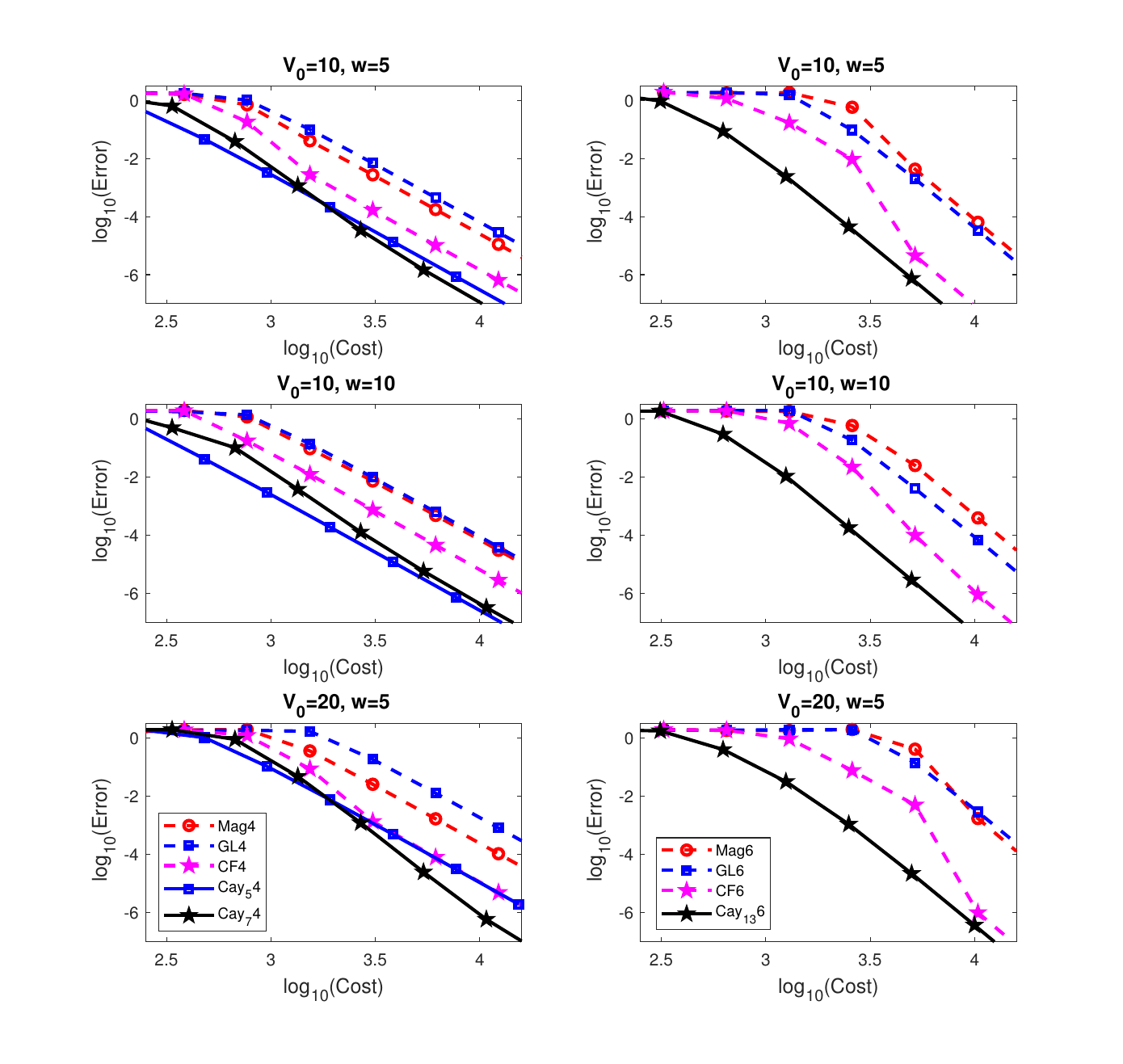}	
	\end{center}
	\caption{{Two-norm error in the fundamental matrix solution of \eqref{r-z.1} at the final time versus the computational cost (number of maps for the modified  Cayley--Magnus methods or the values of time steps multiplied by 16 or 54 for the 4th- and 6th-order RKGL and exponential methods) for different choices of $V_0$ and $\omega$.	}}
	\label{fig0}
\end{figure}



\section*{Acknowledgements}
The work of SB and FC is supported by Ministerio de Ciencia e Innovación (Spain) through project PID2022-
136585NB-C21, \\
MCIN/AEI/10.13039/501100011033/FEDER, UE.  
SB also acknowledges DAMTP at the University of Cambridge
for its hospitatily when part of this work was done and the found from the Ministerio de Ciencia, Innovación y
Universidades for mobility stays in foreign higher education and research centres.

\appendix
\setcounter{equation}{0}
\def\theequation{A.\arabic{equation}}

\section{Exact solution in terms of one-di\-men\-sio\-nal integrals and quadratures}  \label{appendixA}

We present a complete and constructive proof of the fact that the transition matrix ${\cal P}(h)$ in \eqref{eq:SolIt}, associated with the exact solution of \eqref{eq:1}, can be approximated up to order $p=2\ell$ by using only $\ell$ one-dimensional integrals or, alternatively, only the values of the matrix $A$ at the nodes of any quadrature rule of order $p\geq 2\ell$. The proof is developed in several stages, which are detailed below.

\subsection{Legendre polynomials}

We first introduce the shifted Legendre polynomials $P_n(t)$, $n = 0, 1, 2, \ldots$, on the interval $[0,1]$. They are defined by the recurrence relation
\[
P_0(t)=1, \qquad\! P_1(t)=2t-1, \qquad\!
P_{n+1}(t) = \frac{2n+1}{n+1} (2 t -1) P_n(t) - \frac{n}{n+1} P_{n-1}(t),
\]
and satisfy several properties that will be useful in the sequel:

\medskip

\noindent
(1) Orthogonality:
\[
\int_0^1P_m(t)P_n(t)\,dt=\frac1{2n+1} \delta_{m,n}, \quad \mbox{or} \quad \int_0^hP_m(\tfrac{t}{h})P_n(\tfrac{t}{h})\,dt=\frac{h}{2n+1} \delta_{m,n}.
\]

\medskip

\noindent
(2) Integral property for $t\in[0,h]$:
\[
\begin{aligned}
	& \int_0^{t} P_0\big(\tfrac{t_1}{h}\big)\,dt_1 = \frac{h}2 \left[P_0\big(\tfrac{t}{h}\big)+P_1\big(\tfrac{t}{h}\big) \right],\\
	& \int_0^{t} P_n\big(\tfrac{t_1}{h}\big)\,dt_1 = \frac{h}{2(2n+1)} \left[P_{n+1}\big(\tfrac{t}{h}\big)-P_{n-1}\big(\tfrac{t}{h}\big) \right], \qquad n\geq 1,
\end{aligned}
\]
so that integration shifts the polynomial index by one unit.

\medskip

\noindent
(3) Product of Legendre polynomials (Neumann--Adams formula),
\begin{displaymath}
	P_m(t)P_n(t)=\sum_{r=0}^{\min(m,n)} \sigma_r^{(m,n)} P_{m+n-2r}(t),
\end{displaymath}
where the coefficients $\sigma_r^{(m,n)}$ are rational numbers \cite{alsalam56opt}. Thus, the product can be written as a linear combination of Legendre polynomials $P_k(t)$ whose index $k$ ranges from $|m-n|$ to $m+n$. In particular,
\[
\begin{aligned}
	& P_m(t)P_0(t)=\sigma_0^{(m,0)} P_{m}(t), \quad\\
	& P_m(t)P_1(t)=\sigma_0^{(m,1)} P_{m+1}(t)+\sigma_1^{(m,1)} P_{m-1}(t), \qquad m\geq 1,\\
	& P_m(t)P_2(t)=\sigma_0^{(m,2)} P_{m+2}(t)+\sigma_1^{(m,2)} P_{m}(t)+\sigma_2^{(m,2)} P_{m-2}(t), \qquad m\geq 2.
\end{aligned}
\]

\medskip

\noindent
(4) As a consequence of the previous properties, one has
\begin{displaymath}
	\int_0^h P_{m}\big(\tfrac{t_1}{h}\big)\,dt_1\int_0^{t_1} P_{n}\big(\tfrac{t_2}{h}\big)\,dt_2 =0,
	\qquad \mbox{if} \quad |m-n|\geq 2,
\end{displaymath}
and, more generally,
\begin{eqnarray}
	&&\int_0^h P_{\ell_1}\big(\tfrac{t_1}{h}\big)\,dt_1\int_0^{t_1} P_{\ell_2}\big(\tfrac{t_2}{h}\big)\,dt_2\cdots
	\int_0^{t_{k-1}} P_{\ell_k}\big(\tfrac{t_k}{h}\big)\,dt_k =0
	\label{eq:Int_k0}  \\
	&& 
	\qquad \mbox{if} \quad \max\{\ell_1,\ldots,\ell_k\}- \min\{\ell_1,\ldots,\ell_k\}\geq k.
	\nonumber
\end{eqnarray}

\medskip

\noindent
(5) The analytic matrix function $A(t)$ can be expanded in terms of the shifted Legendre polynomials as in \cite{alvermann11hoc},
\[
A(t)=\frac1{h}\sum_{n=0}^{\infty} \beta_{n+1}(h)  \, P_n\big(\tfrac{t}{h}\big),
\qquad \mbox{with} \qquad
\beta_{n+1}(h)=(2n+1)\int_0^hP_n\big(\tfrac{t}{h}\big) A(t)\,dt.
\]
(The factor $\frac1{h}$ in the expansion of $A(t)$ and the shift by one unit in the index of $\beta_{n+1}(h)$ are introduced solely for convenience.) By orthogonality, it follows that
\[
\beta_{n+1}(h)={\cal O}(h^{n+1}),  \qquad n=0,1,\ldots .
\]
Indeed, if one considers the expansion $A(t) = \sum_{m \ge 0} a_m t^m$, then, since $P_n(t)$ is orthogonal to all monomials $t^m$ with $m < n$, and
\[
\beta_n(h) = (2n-1) h \int_0^1 P_{n-1}(s) A(h s)\,ds,
\]
only terms in $h^m$ with $m \ge n$ appear in $\beta_n(h)$.

\medskip

\noindent
(6) The shifted Legendre polynomials are symmetric with respect to $t=1/2$, namely
$P_n(1-t) = (-1)^n P_n(t)$. Consequently, for suitable coefficients,
\[
P_{2n}\big(\tfrac{t}{h}\big)=\sum_{k=0}^{n}a_{n,k}\big(t-\tfrac{h}2\big)^{2k}, \qquad
P_{2n+1}\big(\tfrac{t}{h}\big)=\sum_{k=0}^{n}b_{n,k}\big(t-\tfrac{h}2\big)^{2k+1}.
\]
Moreover, by considering the Taylor expansion of $A(t)$, $t \in [0,h]$, about the midpoint $h/2$, one obtains
\begin{eqnarray*}
	\delta_k(h)&=&\int_0^h\big(t-\tfrac{h}2\big)^k A(t)\,dt=
	\int_{-h/2}^{h/2} \tau^k A\big(\tau+\tfrac{h}2\big)\,d\tau \\
	&=& 
	\sum_{n=0}^{\infty} \frac1{n!}\left.\frac{d^n A(\tau+h/2)}{d\tau^n}\right|_{\tau=0}\int_{-h/2}^{h/2} \tau^{k+n}\,d\tau,
\end{eqnarray*}
so that $\delta_k(-h) = (-1)^k \delta_k(h)$. As a consequence,
\[
\beta_{2k-1}(-h)=-\beta_{2k-1}(h), \qquad
\beta_{2k}(-h)=\beta_{2k}(h), \qquad k=1,2,\ldots
\]

\subsection{The iterated integral $I_k(h)$}

With the aid of the preceding properties, we are now in a position to compute the iterated integrals \eqref{eq:ii_A} appearing in the expansion
${\cal P}(h)=\sum_{k=0}^{\infty}I_k(h)$. Clearly,
\[
I_1(h) = \int_0^h A(t)\,dt= \beta_1,
\]
whereas, for $I_2(h)$, a direct computation gives
\allowdisplaybreaks
\begin{align*}
&	I_2(h)=	
	\int_0^h A(t_1)\,dt_1	\int_0^{t_1} A(t_2)\,dt_2 \\
&	=	
	\int_0^h \frac1{h}\sum_{k=0}^{\infty} \beta_{k+1} P_k(\tfrac{t_1}{h})\,dt_1	\int_0^{t_1} \frac1{h}\sum_{n=0}^{\infty} \beta_{n+1} P_n(\tfrac{t_2}{h})\,dt_2 \\
&       =	
	\frac1{h^2}\sum_{k=0}^{\infty} \beta_{k+1} \beta_{1} 
	\int_0^{h} P_k(\tfrac{t_1}{h})\,dt_1	\int_0^{t_1}  P_0(\tfrac{t_2}{h})\,dt_2 \\
 & +	
	\frac1{h^2}\sum_{k=0}^{\infty}\sum_{n=1}^{\infty} \beta_{k+1} \beta_{n+1} 
	\int_0^{h} P_k(\tfrac{t_1}{h})\,dt_1	\int_0^{t_1}  P_n(\tfrac{t_2}{h})\,dt_2 \\
 & =	
	\frac1{h}\sum_{k=0}^{\infty} \beta_{k+1} \beta_{1} 
	\int_0^{h} P_k(\tfrac{t_1}{h})\,dt_1	\frac12\left(P_{1}(\tfrac{t_1}{h})+P_{0}(\tfrac{t_1}{h})\right) \\
 & +	
	\frac1{h}\sum_{k=0}^{\infty}\sum_{n=1}^{\infty} \beta_{k+1} \beta_{n+1} \frac{1}{2(2n+1)}
	\int_0^{h} P_k(\tfrac{t_1}{h})\,dt_1	\left(P_{n+1}(\tfrac{t_1}{h})-P_{n-1}(\tfrac{t_1}{h})\right) \\
& =	\ldots  \\
 & =	\frac12 \beta_{1}\beta_{1}+ 
	\sum_{n=0}^{\infty}	\frac{1}{2(2n+1)(2n+3)} \,  [\beta_{n+2}, \beta_{n+1}]  \\
& =	\frac12 \beta_{1}\beta_{1}+  \frac16 [\beta_{2},\beta_{1}]+  \frac1{30} [\beta_{3},\beta_{2}]+  \frac1{70} [\beta_{4},\beta_{3}]+
	\cdots
\end{align*}
The same procedure can be carried out in general, leading to
\begin{equation} \label{eq:Int_k}
	\begin{aligned}
		I_k(h) & =\int_0^h A(t_1)\,dt_1\int_0^{t_1} A(t_2)\,dt_2\cdots
		\int_0^{t_{k-1}} A(t_k)\,dt_k \\
		&=	\sum_{n=k}^{\infty} \sum_{\substack{ \ell_1,\ldots,\ell_k \geq 1\\ \ell_1+\cdots+\ell_k=n }}
		\sigma_{\ell_1,\ldots,\ell_k} \, \beta_{\ell_1}(h) \ldots\beta_{\ell_k}(h),
	\end{aligned}
\end{equation}
where, by virtue of \eqref{eq:Int_k0},
\begin{equation} \label{eq:sigma}
	\sigma_{\ell_1,\ldots,\ell_k}=0 
	\qquad \mbox{if} \quad \max(\ell_1,\ldots,\ell_k)- \min(\ell_1,\ldots,\ell_k)\geq k.
\end{equation}

\subsection{Quadrature rules}

Let $b_i$ and $c_i$, $i=1,\ldots,\mu$, be the weights and nodes of a quadrature rule of order $p$. Then, for the analytic matrix function $A(t)$, one has
\[
\int_0^h A(t)\,dt = h\sum_{i=1}^{\mu} b_i \, A(c_i h) + {\cal O}(h^{p+1}).
\]
In the particular case of the Gauss--Legendre (GL) quadrature rule, $\mu = \ell$ and $p= 2 \ell$. When this quadrature rule is used to approximate the integrals
$\beta_{n+1}(h)$, one obtains
\begin{eqnarray}
	\beta_{n+1}(h)&\!\!\!=\!\!\!&(2n+1)\int_0^hP_n \big(\tfrac{t}{h}\big) A(t)\,dt =
	\frac{2n+1}{h^n}\int_0^h h^nP_n\big(\tfrac{t}{h}\big) A(t)\,dt \nonumber \\
	& \!\!\!=\!\!\! & 
	\frac{2n+1}{h^n} \left[h\sum_{i=1}^{\ell} b_i h^n P_n(c_i) A(c_i h)+ {\cal O}(h^{2\ell+1})\right] \nonumber  \\
	& \!\!\!=\!\!\! & 
	(2n+1) h\sum_{i=1}^{\ell} b_i P_n(c_i) A(c_i h)+ {\cal O}(h^{2\ell+1-n}),
	\label{eq:Int_Num_p}
\end{eqnarray}
so that the approximation is just of order $2\ell-n$, $n=0,1,\ldots,\ell-1$. Defining
\begin{equation} \label{eq:alpha_n}
	\beta_{n+1}^{(\ell)}(h) \equiv  (2n+1) h\sum_{i=1}^{\ell} b_i P_n(c_i) A(c_i h),
\end{equation}
 clearly
\[
\beta_{n+1}^{(\ell)}(h)={\cal O}(h^{n+1}), \qquad\quad 
\beta_{n+1}^{(\ell)}(h)=\beta_{n+1}(h)+ {\cal O}(h^{2\ell+1-n}),
\]
and
\[
\beta_{2k-1}^{(\ell)}(-h)=-\beta_{2k-1}^{(\ell)}(h), \qquad\quad
\beta_{2k}^{(\ell)}(-h)=\beta_{2k}^{(\ell)}(h), \qquad k=1,2,\ldots.
\]
Similar results are obtained for an arbitrary symmetric quadrature rule of order $p\geq 2\ell$.

\subsection{Proof of the main result}

We now have all the ingredients required to prove our claim that ${\cal P}(h)$ can be approximated up to order ${\cal O}(h^{2\ell})$ by using only $\ell$ univariate integrals or any quadrature rule of order $p\geq 2\ell$.

Let us define
\[
{\cal P}^{[2\ell]}(h) \equiv \sum_{k=0}^{2\ell}I_k(h).
\]
Since $I_k(h)={\cal O}(h^k)$, it follows that
\[
{\cal P}^{[2\ell]}(h)={\cal P}(h)+{\cal O}(h^{2\ell+1}).
\]
Taking \eqref{eq:Int_k} into account, we obtain
\begin{displaymath} 
	{\cal P}^{[2\ell]}(h)=I+\sum_{k=1}^{2\ell}\sum_{n=k}^{\infty} \sum_{\substack{ \ell_1,\ldots,\ell_k\geq1 \\ \ell_1+\cdots+\ell_k=n }}
	\sigma_{\ell_1,\ldots,\ell_k} \, \beta_{\ell_1}(h)\ldots\beta_{\ell_k}(h),
\end{displaymath}
featuring the full sequence of matrices $\{\beta_k(h)\}_{k=1}^{\infty}$.

Since $\beta_{\ell_m}(h) = \mathcal{O}(h^{\ell_m})$, $m=1, 2, \ldots$, one has
\[
\beta_{\ell_1}(h)\cdots\beta_{\ell_k}(h) = {\cal O}(h^{\mu}), \qquad \mbox{with} \qquad \mu=\sum_{i=1}^k \ell_i,
\]
and also
\[
\beta_{\ell_1}(h)\cdots\beta_{\ell_k}(h)  = \beta_{\ell_m}(h) \, {\cal O}(h^{\mu_{m}}), \qquad \mu_{m}=\sum_{i\neq m} \ell_i, \qquad m=1,\ldots, k.
\]
In other words, each factor $\beta_{\ell_m}(h)$, $m=1,\ldots,k$, in \eqref{eq:Int_k} is multiplied by terms whose total contribution is of order ${\cal O}(h^{\mu_{m}})$. The next proposition gives further information on the value of $\mu_m$.

\begin{proposition} \label{prop_A2}
	Each product $\beta_{\ell_1}(h) \cdots \beta_{\ell_k}(h)$ in the expression of $I_k(h)$ given in \eqref{eq:Int_k} satisfies
	\[
	\beta_{\ell_1}(h) \cdots\beta_{\ell_k}(h) = \beta_{\ell_m}(h) \, {\cal O}(h^{\mu_{m}}), \qquad \mbox{with} \qquad \mu_{m}\geq \ell_m-1
	\]
	for every $m=1,\ldots,k$.
\end{proposition}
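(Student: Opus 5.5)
We need to show: for any nonzero term with $\sigma_{\ell_1,\ldots,\ell_k}\neq 0$ in \eqref{eq:Int_k}, and each $m$, $\mu_m=\sum_{i\ne m}\ell_i\ge \ell_m-1$. The plan is to use only the vanishing condition \eqref{eq:sigma}, which is inherited from \eqref{eq:Int_k0}: a nonzero coefficient forces $\max_i\ell_i-\min_i\ell_i\le k-1$. The desired inequality is then a purely combinatorial fact about $k$-tuples of positive integers whose spread is at most $k-1$.

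The steps, in order, are as follows.

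\emph{Case $k=1$.} Here $\mu_m=0$ and the spread condition forces nothing. However, for $I_1=\beta_1$ only $\ell_1=1$ occurs, since $I_1(h)=\int_0^h A=\beta_1$ exactly. This follows from orthogonality of $P_n$ to $P_0$, i.e.\ the expansion in (5) integrated gives only the $n=0$ term. Hence $\ell_1-1=0\le\mu_1$.

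\emph{Case $k\ge2$.} Fix $m$ and set $L=\ell_m$ and $\lambda=\min_i\ell_i\ge1$. Nonvanishing of $\sigma$ gives $L\le \lambda+k-1$. Every other index satisfies $\ell_i\ge\lambda$, so
\[
\mu_m=\sum_{i\neq m}\ell_i\ \ge\ (k-1)\lambda .
\]
It therefore suffices to check $(k-1)\lambda\ge \lambda+k-2$, which is equivalent to $(k-2)(\lambda-1)\ge0$. This holds because $k\ge2$ and $\lambda\ge1$. Combining the two inequalities gives $\mu_m\ge(k-1)\lambda\ge \lambda+k-2\ge L-1=\ell_m-1$.

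Finally, one records the consequence used later. Since $\beta_{\ell_m}(h)$ is replaced by $\beta^{(\ell)}_{\ell_m}(h)$ with error $\mathcal{O}(h^{2\ell+1-(\ell_m-1)})$, multiplying by the remaining factors of size $\mathcal{O}(h^{\mu_m})$ with $\mu_m\ge\ell_m-1$ yields an error $\mathcal{O}(h^{2\ell+1})$.

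The main obstacle is not the arithmetic but justifying that \eqref{eq:sigma} genuinely applies to every term of \eqref{eq:Int_k}. Concretely, one must confirm that the iterated integral of Legendre polynomials in \eqref{eq:Int_k0} vanishes when the spread is at least $k$. I would take this as given from property (4). If a check were wanted, it would go by induction on $k$: each inner integration shifts indices by at most one (property 2), each product with the next $P_{\ell_j}$ spreads indices by the Neumann--Adams range (property 3), and the outer integral against $P_0$ kills everything except index $0$ (orthogonality).
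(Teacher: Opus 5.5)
Your proof is correct and follows the paper's route: you bound $\mu_m$ below by $(k-1)$ times the smallest index, and you use the spread condition \eqref{eq:sigma} to compare that minimum with $\ell_m$. The only difference is the final arithmetic. Your single factorization $(k-1)\lambda-(\lambda+k-2)=(k-2)(\lambda-1)\ge 0$ replaces the paper's case split ($\ell_m\le k$ versus $\ell_m>k$) and its parabola-endpoint argument, which is a tidier way to close the same estimate.
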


\begin{proof}
	This is clearly true for $k=1,2$, as shown by the explicit computations above. Furthermore,
	\[
	\mu_m \geq \min_{i\neq m}\{\ell_i\}(k-1),
	\]
	since it contains $k-1$ factors, each of order at least $\min_{i\neq m}\{\ell_i\}$. From \eqref{eq:sigma}, it follows that
	\begin{equation} \label{eq:min_ell}
		\min_{i\neq m}\{\ell_i\} \geq  \max\{ 1,\max_{j}\{\ell_j\}-k+1\} \geq \max\{ 1,\ell_m-k+1\},
	\end{equation}
	because $\max_{j}\{\ell_j\}\geq \ell_m$ and $\min_{i\neq m}\{\ell_i\}\geq 1$.
	Therefore:
	\begin{itemize}
		\item If $\ell_m\leq k$ (with $k \ge 2$), then $\min_{i\neq m}\{\ell_i\}\geq 1$, and hence
		\[
		\mu_m \geq \min_{i\neq m}\{\ell_i\}(k-1) \geq \ell_m-1,
		\]
		as required.
		
		\item If $\ell_m> k$, then, by \eqref{eq:min_ell},
		\[
		\mu_{m} \geq \min_{i\neq m}\{\ell_i\}(k-1)\geq (\ell_m-k+1)(k-1).
		\]
		The right-hand side defines a parabola in the variable $k$, with extreme values attained at the boundary points $k=2$ and $k=\ell_m-1$ (for $\ell_m\geq 3$). In both cases one obtains $\mu_{m}\geq \ell_m-1$.
	\end{itemize}
	This completes the proof.
\end{proof}

\begin{proposition} \label{prop_A1}
	Let $J(\beta_1,\ldots,\beta_{\ell};h)$ denote the function obtained from $\mathcal{P}^{[2 \ell]}(h)$ by setting $\beta_n=0$ for all $n>\ell$, and 
	$\beta_{\ell_1}\cdots\beta_{\ell_k}= 0$ if $\sum_{i=1}^k \ell_i>2\ell$, i.e.
	\begin{equation} \label{expre_J}
		J(\beta_1,\ldots,\beta_{\ell};h)= I + \sum_{k=1}^{2 \ell} \sum_{\substack{ \ell_1,\ldots,\ell_k\leq\ell \\ \ell_1+\cdots+\ell_k \le 2 \ell }}
		\sigma_{\ell_1,\ldots,\ell_k} \, \beta_{\ell_1}(h) \cdots\beta_{\ell_k}(h).
	\end{equation}
	Then
	\[
	J(\beta_1,\ldots,\beta_{\ell};h)= \mathcal{P}(h)+{\cal O}(h^{2\ell+1}).
	\]
\end{proposition}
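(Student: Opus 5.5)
We start from the expression of ${\cal P}^{[2\ell]}(h)$ in terms of the full sequence $\{\beta_k(h)\}$ obtained via \eqref{eq:Int_k}, and recall that ${\cal P}^{[2\ell]}(h)={\cal P}(h)+{\cal O}(h^{2\ell+1})$ because $I_k(h)={\cal O}(h^k)$. It therefore suffices to show that ${\cal P}^{[2\ell]}(h)-J(\beta_1,\ldots,\beta_\ell;h)={\cal O}(h^{2\ell+1})$. This difference is precisely the sum of all terms $\sigma_{\ell_1,\ldots,\ell_k}\beta_{\ell_1}\cdots\beta_{\ell_k}$ with $1\le k\le 2\ell$ for which either (i) the total grade $\mu=\ell_1+\cdots+\ell_k$ exceeds $2\ell$, or (ii) $\mu\le 2\ell$ but some index $\ell_m>\ell$. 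We treat these two families separately.

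Family (i) is immediate. Since $\beta_n(h)={\cal O}(h^n)$ by property (5), each such product is ${\cal O}(h^{\mu})$ with $\mu\ge 2\ell+1$. The sum over $n$ in \eqref{eq:Int_k} is an infinite series, so I will argue at the level of formal power series in $h$, which is legitimate since $A$ is analytic. Only finitely many products contribute to any fixed power of $h$, and consequently the whole tail is ${\cal O}(h^{2\ell+1})$.

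Family (ii) is the main point, and here I would invoke Proposition~\ref{prop_A2}. Take a product with $\mu\le 2\ell$ and some $\ell_m\ge \ell+1$. By Proposition~\ref{prop_A2}, the remaining factors contribute at least $h^{\mu_m}$ with $\mu_m\ge \ell_m-1\ge \ell$. Hence the product has order at least $\ell_m+\mu_m\ge 2\ell_m-1\ge 2\ell+1$. This contradicts $\mu\le 2\ell$, so the only nonzero terms of this kind must have $\sigma_{\ell_1,\ldots,\ell_k}=0$. Equivalently, one may note directly that any nonvanishing coefficient involving $\beta_{\ell_m}$ with $\ell_m>\ell$ is already of order $\ge 2\ell+1$, and so it falls into family (i). Either way, family (ii) contributes only ${\cal O}(h^{2\ell+1})$. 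The case $k=1$, namely $\beta_{\ell_1}$ alone with $\ell_1>\ell$, is trivially of order $\ge \ell+1$. However, $\ell+1<2\ell+1$, so this case needs care. For $k=1$ the sum is $I_1=\beta_1$ only, since $\sigma_{\ell_1}=0$ for $\ell_1\ge2$ by \eqref{eq:sigma}, which therefore handles it.

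I expect the main obstacle to be this bookkeeping in family (ii). One must make sure that the bound $\mu_m\ge\ell_m-1$ of Proposition~\ref{prop_A2} applies uniformly to every $k\le 2\ell$, including the degenerate case $k=1$ handled via \eqref{eq:sigma}. One must also justify that the rearrangement of the double infinite sum into grades is valid, and this justification is supplied by the analyticity of $A$ and the estimate $\beta_n={\cal O}(h^n)$. Combining the three estimates gives $J={\cal P}^{[2\ell]}(h)+{\cal O}(h^{2\ell+1})={\cal P}(h)+{\cal O}(h^{2\ell+1})$.
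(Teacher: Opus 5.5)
Your argument is correct and follows the paper's own proof. Reduce to $\mathcal{P}^{[2\ell]}(h)$, discard the terms of total grade exceeding $2\ell$, and use Proposition~\ref{prop_A2}, noting that $\ell_m+\mu_m$ is the total grade, so any term containing some $\beta_{\ell_m}$ with $\ell_m>\ell$ already has grade at least $2\ell+1$. One correction: for $k=1$ you have $\max\{\ell_1\}-\min\{\ell_1\}=0<1=k$, so \eqref{eq:sigma} is vacuous there, and $\sigma_{\ell_1}=0$ for $\ell_1\ge 2$ comes instead from the direct computation $I_1(h)=\int_0^h A(t)\,dt=\beta_1$ (orthogonality of $P_n$, $n\ge 1$, to $P_0$); that is how the paper already covers $k=1$ inside Proposition~\ref{prop_A2}, so your separate treatment of this case is unnecessary.
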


\begin{proof}
	It is enough to prove that
	\[
	J(\beta_1,\ldots,\beta_{\ell};h)= {\cal P}^{[2\ell]}(h)+{\cal O}(h^{2\ell+1}).
	\]
	From Proposition \ref{prop_A2} we conclude that all terms in ${\cal P}^{[2\ell]}(h)$ satisfy
	\begin{equation} \label{eq:prod_beta}
		\beta_{\ell_1}\cdots\beta_{\ell_k} = \beta_{\ell_m} \, {\cal O}(h^{\mu_{m}})= {\cal O}(h^{\ell_m+\mu_{m}}) , \qquad \mu_{m}\geq \ell_m-1, 
		\qquad m=1,2,\ldots,k.
	\end{equation}
	Consequently, if $\ell_m>\ell$ for some $m$, then $\ell_m+\mu_{m}>2\ell$, and therefore such a term contributes only at order $2 \ell +1$ or higher. Obviously, this is also the case if $\ell_1+\ldots+\ell_k > 2\ell$.
\end{proof}

Proposition \ref{prop_A1} shows that an approximation to $\mathcal{P}^{[2\ell]}(h)$ up to order $h^{2\ell}$ can be obtained by using solely the $\ell$ univariate integrals $\beta_1(h), \ldots, \beta_{\ell}(h)$ through \eqref{expre_J}. 
For instance, for order four we have  ${\cal P}^{[4]}(h)=I+I_1(h)+I_2(h)+I_3(h)+I_4(h)$, where $I_1(h),I_2(h)$ have already been computed, and a simple computation confirms that
\[
J(\beta_1,\beta_2;h)= I+\beta_1+\frac12 \beta_1^2+\frac16 [\beta_2,\beta_1]+\frac16 \beta_1^3+\frac1{12} \left(\beta_1[\beta_2,\beta_1]+[\beta_2,\beta_1]\beta_1\right)+\frac1{24} \beta_1^4
\]
and $J(\beta_1,\beta_2;h)={\cal P}(h)+{\cal O}(h^5)$, being a 4th-order approximation in terms of just two one-dimensional integrals, $\beta_1$ and $\beta_2$.

It remains to determine the accuracy with which these integrals must be approximated in order to retain the overall order $2\ell$. The following proposition provides the answer.

\begin{proposition}\label{prop_A4}
	Let $\beta_{n+1}^{(\ell)}(h)$ be any analytical or numerical approximation of the univariate integral $\beta_{n+1}(h)$ of order $2\ell-n$, that is,
	\[
	\beta_{n+1}^{(\ell)}(h)=\beta_{n+1}(h)+{\cal O}(h^{2\ell-n+1}), \qquad n \ge 0.
	\]
	Then
	\[
	J(\beta_1^{(\ell)},\ldots,\beta_{\ell}^{(\ell)};h)=  \mathcal{P}(h)+{\cal O}(h^{2\ell+1}).
	\]
\end{proposition}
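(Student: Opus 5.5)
By Proposition~\ref{prop_A1}, $J(\beta_1,\ldots,\beta_\ell;h)=\mathcal{P}(h)+{\cal O}(h^{2\ell+1})$. It therefore suffices to show
\[
J(\beta_1^{(\ell)},\ldots,\beta_\ell^{(\ell)};h)-J(\beta_1,\ldots,\beta_\ell;h)={\cal O}(h^{2\ell+1}).
\]
Since $J$ is a finite polynomial \eqref{expre_J} in the $\beta_j$, I will treat it monomial by monomial. Write $\beta_j^{(\ell)}=\beta_j+\epsilon_j$ with $\epsilon_j={\cal O}(h^{2\ell-j+2})$; this is the hypothesis with $n=j-1$. Both $\beta_j$ and $\beta_j^{(\ell)}$ are ${\cal O}(h^j)$, because $\epsilon_j$ is of even higher order when $j\le\ell$.

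Fix a monomial $\beta_{\ell_1}\cdots\beta_{\ell_k}$ with nonzero $\sigma_{\ell_1,\ldots,\ell_k}$, all $\ell_i\le\ell$, and $\sum\ell_i\le 2\ell$. Expanding $\prod_i(\beta_{\ell_i}+\epsilon_{\ell_i})$ by multilinearity and discarding the unperturbed product leaves terms in which at least one factor is an $\epsilon$. Consider any such term that contains $\epsilon_{\ell_m}$ in position $m$ and, in every other position, either $\beta_{\ell_i}$ or $\epsilon_{\ell_i}$. Its order is at least
\[
(2\ell-\ell_m+2)+\sum_{i\ne m}\ell_i .
\]
The monomial is nontrivial only for $k\ge 1$. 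When $k\ge2$, Proposition~\ref{prop_A2} gives $\sum_{i\ne m}\ell_i=\mu_m\ge\ell_m-1$. The order is then at least $2\ell-\ell_m+2+\ell_m-1=2\ell+1$, which is the bound we need. When $k=1$ the term is simply $\epsilon_{\ell_1}={\cal O}(h^{2\ell-\ell_1+2})$, and this is ${\cal O}(h^{2\ell+1})$ whenever $\ell_1\ge1$ (in fact it is one order better). Summing over the finitely many monomials and perturbation patterns completes the argument.

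The main obstacle is that Proposition~\ref{prop_A2} bounds $\mu_m$ only for monomials that actually occur, that is, those with $\sigma_{\ell_1,\ldots,\ell_k}\neq0$. The step where this vanishing is invoked must therefore be stated carefully. Only monomials with nonzero $\sigma$ appear in $J$, so this causes no difficulty. A crude count $\sum_{i\ne m}\ell_i\ge k-1$ would not suffice on its own; the constraint \eqref{eq:sigma} is essential. A smaller concern is the composite terms, where several factors are perturbed. These are handled by the same estimate, since replacing further $\beta$'s by $\epsilon$'s only raises the order. Finally, the quadrature approximation \eqref{eq:alpha_n} satisfies the hypothesis by \eqref{eq:Int_Num_p}, so the result applies in particular to Gauss--Legendre rules.
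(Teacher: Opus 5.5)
Your argument for the monomials with $k\ge2$ is correct and is essentially the paper's. The paper replaces $\beta_{\ell_1},\beta_{\ell_2},\ldots$ by $\beta^{(\ell)}_{\ell_1},\beta^{(\ell)}_{\ell_2},\ldots$ one factor at a time. It bounds each replacement error by $\mathcal{O}(h^{2\ell+2-\ell_m})\,\mathcal{O}(h^{\ell_m-1})$ via Proposition~\ref{prop_A2}, which is the same estimate you apply to each term of the multilinear expansion.

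The gap is your $k=1$ case. There the perturbation is $\epsilon_{\ell_1}=\mathcal{O}(h^{2\ell-\ell_1+2})$. The inequality $2\ell-\ell_1+2\ge 2\ell+1$ holds only when $\ell_1\le 1$, not ``whenever $\ell_1\ge1$''. There is also no order to spare: for $\ell_1=1$ the hypothesis gives exactly $\epsilon_1=\mathcal{O}(h^{2\ell+1})$. As written, your argument would equally accept a linear term such as $\beta_\ell$ in $J$. Its perturbation $\epsilon_\ell=\mathcal{O}(h^{\ell+2})$ destroys the order as soon as $\ell\ge2$.

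The missing fact is that the only linear term in $J$ is $\beta_1$. Since $\int_0^h P_n(t/h)\,dt=0$ for $n\ge1$, one has $I_1(h)=\beta_1$ exactly, so $\sigma_{\ell_1}=0$ for $\ell_1\ge2$. This does not follow from \eqref{eq:sigma}, which is vacuous for $k=1$ because there $\max-\min=0$. So your remark that \eqref{eq:sigma} is the essential ingredient does not cover this case.

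The paper handles it by asserting Proposition~\ref{prop_A2} for $k=1$ as well (``clearly true for $k=1,2$, as shown by the explicit computations above''). For $k=1$ that statement reduces to $0=\mu_1\ge\ell_1-1$. If you apply Proposition~\ref{prop_A2} uniformly for all $k\ge1$ instead of treating $k=1$ separately with the incorrect bound, your proof goes through.
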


\begin{proof}
	Since $\beta_{n+1}^{(\ell)}(h)=\beta_{n+1}(h)+{\cal O}(h^{\nu})$ with $\nu\geq  2\ell+1-n$, it follows that
	$\beta_{\ell_1}(h)=\beta_{\ell_1}^{(\ell)}(h)+{\cal O}(h^{2\ell+2-\ell_1})$. Consequently,
	\[
	\begin{aligned}
		\beta_{\ell_1}\ldots\beta_{\ell_k}  & =
		\Big(\beta_{\ell_1}^{(\ell)}+{\cal O}(h^{2\ell+2-\ell_1})\Big)\beta_{\ell_2}\ldots\beta_{\ell_k} \\
		& =
		\beta_{\ell_1}^{(\ell)}\beta_{\ell_2}\ldots\beta_{\ell_k}+
		{\cal O}(h^{2\ell+2-\ell_1}) {\cal O}(h^{\ell_1-1}) \\
		&=
		\beta_{\ell_1}^{(\ell)}\beta_{\ell_2}\ldots\beta_{\ell_k} + {\cal O}(h^{2\ell+1}),
	\end{aligned}
	\]
	since $\beta_{\ell_2}\ldots\beta_{\ell_k}= {\cal O}(h^{\ell_1-1})$. This follows from the fact that $\beta_{\ell_1} = \mathcal{O}(h^{\ell_1})$ and, according to \eqref{eq:prod_beta}, $\beta_{\ell_2}\cdots\beta_{\ell_k} = \mathcal{O}(h^{\ell_1 -1})$. The same argument can then be applied successively to $\beta_{\ell_2},\ldots,\beta_{\ell_k}$, yielding finally
	\[
	\beta_{\ell_1}\cdots\beta_{\ell_k} =
	\beta_{\ell_1}^{(\ell)}\cdots\beta_{\ell_k}^{(\ell)}+ {\cal O}(h^{2\ell+1})
	\]
	for every term in the series defining $J(\beta_1,\ldots,\beta_{\ell};h)$.
\end{proof}

\begin{corollary} \label{corol_A5}
	Let $b_i,c_i$, $i=1,\ldots,\mu$, be the weights and nodes of an arbitrary quadrature rule of order $p\geq2\ell$, and define, as in \eqref{eq:alpha_n},
	\[
	\beta_{n+1}^{(\ell)}(h) = (2n+1) h\sum_{i=1}^{\mu} b_i P_n(c_i) A(c_i h).
	\]
	Then
	\[
	J(\beta_1^{(\ell)},\ldots,\beta_{\ell}^{(\ell)};h)={\cal P}(h)+{\cal O}(h^{2\ell+1}).
	\]
\end{corollary}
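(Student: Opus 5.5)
The corollary follows from Proposition~\ref{prop_A4} once we check its single hypothesis for the quadrature-based approximations: for each $n=0,1,\ldots,\ell-1$,
\[
\beta_{n+1}^{(\ell)}(h)=\beta_{n+1}(h)+{\cal O}(h^{2\ell-n+1}).
\]
Only the indices $n+1\le \ell$ enter $J$, so no other $\beta$'s need to be controlled. Once this estimate is in hand, Proposition~\ref{prop_A4} applied with these $\beta^{(\ell)}_{n+1}$ gives $J(\beta_1^{(\ell)},\ldots,\beta_\ell^{(\ell)};h)={\cal P}(h)+{\cal O}(h^{2\ell+1})$ directly.

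To obtain the estimate, I will repeat the computation \eqref{eq:Int_Num_p} for a general rule. Write
\[
\beta_{n+1}(h)=\frac{2n+1}{h^n}\int_0^h g_n(t)\,dt, \qquad g_n(t)=h^nP_n(t/h)A(t).
\]
Because $h^nP_n(t/h)$ is a polynomial in $t$ and $h$ whose coefficients are bounded independently of $h$, $g_n$ is analytic on $[0,h]$ and its derivatives up to order $p$ are bounded uniformly in $h$. The quadrature error for $\int_0^h g_n$ under a rule of order $p$ is therefore ${\cal O}(h^{p+1})$. Dividing by $h^n$ gives an error of order ${\cal O}(h^{p+1-n})$, and since $p\ge 2\ell$ this is at most ${\cal O}(h^{2\ell+1-n})$, which is exactly the required accuracy. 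The quadrature sum itself is $(2n+1)h^{-n}\,h\sum_i b_i h^nP_n(c_i)A(c_ih)$, which reduces to the stated formula for $\beta^{(\ell)}_{n+1}(h)$.

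The one step that needs care is the uniformity of the quadrature error constant in $h$. I plan to rescale $t=hs$: then $\int_0^h g_n\,dt=h^{n+1}\int_0^1 P_n(s)A(hs)\,ds$, and the rule is applied on $[0,1]$ to $F(s)=P_n(s)A(hs)$. Taylor-expanding $A(hs)$ in powers of $h$, the rule integrates exactly every term of degree $\le p-1$ in $s$. Each term of degree $j$ in $s$ carries a factor $h^{j-n}$ relative to $P_n$, because $P_n$ has degree $n$, so the first term not integrated exactly carries $h^{p-n}$. After multiplying back by the prefactor $(2n+1)h$, the error is ${\cal O}(h^{p+1-n})$, as claimed. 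This argument uses only the exactness of the rule on polynomials of degree below $p$ and does not require the rule to be symmetric. Symmetry is needed only for the parity properties of the $\beta$'s, which the corollary does not assert.
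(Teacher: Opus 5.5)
Your proposal is correct and follows the paper's proof. The paper reads off $\beta_{n+1}^{(\ell)}(h)=\beta_{n+1}(h)+{\cal O}(h^{2\ell+1-n})$, $n=0,\ldots,\ell-1$, from the computation \eqref{eq:Int_Num_p} and invokes Proposition~\ref{prop_A4}. Your rescaling $t=hs$ only makes explicit something the paper takes for granted: for a general rule of order $p\ge 2\ell$, the quadrature error for the $h$-dependent integrand $h^nP_n(t/h)A(t)$ is uniform in $h$.
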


\begin{proof}
	From \eqref{eq:Int_Num_p} it is clear that
	\[
	\beta_{n+1}^{(\ell)}(h) =\beta_{n+1}(h)+{\cal O}(h^{2\ell+1-n}), \qquad n=0,1,\ldots,\ell-1,
	\]
	and Proposition \ref{prop_A4} therefore applies.
\end{proof}

If the GL quadrature rule with $\ell$ nodes is used to approximate the univariate integrals $\beta_{n+1}(h)$, then $\mu=\ell$, $p=2\ell$, and one still obtains an approximation to $\mathcal{P}(h)$ of order $2\ell$.

For instance, using the 4th-order GL quadrature rule with $c_1=\frac12-\frac{\sqrt{3}}{6}, \ c_2=\frac12+\frac{\sqrt{3}}{6}, \ b_1=b_2=\frac12, \ A_i=A(c_ih), \ i=1,2,$ we have
\[
\begin{aligned}
	\beta_{1}^{(2)}  &= h\Big(b_1P_0(c_1)A_1+P_0(c_2)A_2\Big)=\frac{h}2(A_1+A_2) = \beta_1+{\cal O}(h^5) \\
	\beta_{2}^{(2)}  &= h\Big(b_1P_1(c_1)A_1+P_1(c_2)A_2\Big)=\frac{\sqrt{3}h}2(A_2-A_1) = \beta_2+{\cal O}(h^4) ,
\end{aligned}
\]
and $J(\beta_1^{(2)},\beta_2^{(2)};h)={\cal P}(h)+{\cal O}(h^5)$.

\paragraph{Remark.}
We have shown that the fundamental matrix solution ${\cal P}(h)$ can be approximated up to order ${\cal O}(h^{2\ell})$ by means of the one-dimensional integrals
\[
\beta_{n+1}(h)=(2n+1)\int_0^h P_n\!\left(\frac{t}{h}\right) A(t)\,dt, \qquad n=0,1,\ldots,\ell-1.
\]
The actual approximation can be constructed as follows. We consider the expansion of the matrix $A(t)$ in terms of shifted Legendre polynomials,
\[
A(t)=\frac{1}{h}\sum_{n=0}^{\infty}   \beta_{n+1}(h) P_n\!\left(\frac{t}{h}\right),
\]
and substitute it into the nested integrals $I_n(h)$, for $n=1,\ldots,2\ell$. By carrying out the resulting integrations analytically, retaining only those terms involving $\beta_1,\ldots,\beta_{\ell}$, and truncating the expansion beyond order $2\ell$, one obtains the desired approximation.
Moreover, extending the expansion to include, for instance, $I_{2\ell+1}, I_{2\ell+2}$, the terms $\beta_1,\ldots,\beta_{\ell+1}$ and truncating the solution at order $2\ell+1$ or $2\ell+2$ provides additional insight into the approximation error, since the leading neglected terms can then be explicitly identified. This can be useful when constructing
numerical integration methods.

\subsection{Approximating the solution in different bases}

In some cases, it is convenient to express approximations of the exact solution using bases other than the univariate integrals $\beta_{n+1}(h)$ or their approximations $\beta_{n+1}^{(\ell)}(h)$. In particular, alternative representations that relate the error terms to derivatives of the matrix $A(t)$ are especially appropriate, since these derivatives are often known in advance.

One such basis is formed by the momentum matrices \cite{blanes00iho}
\begin{equation} \label{def_Bn}
	B^{(n)}(h)
	=\frac1{h^{n+1}}\int_0^h\left(t-\tfrac{h}{2}\right)^n A(t)dt
	=\frac1{h^{n+1}}\int_{-h/2}^{h/2}t^n A(t+\tfrac{h}2)dt, \quad n=0,1,\ldots, 
\end{equation}
since they can be written in terms of the integrals $\beta_n(h)$, and the momentum integrals for $n=0,1,\ldots,\ell-1$ suffice to approximate the solution up to order $2\ell$. By construction, the $B^{(2k)}$s (respectively, $B^{(2k+1)}$s) are odd (respect., even)
functions of $h$, although $B^{(n)}\neq{\cal O}(h^{n})$ for $n>1$. For this reason it is more convenient to take appropriate linear combinations satisfying this last requirement. This can be achieved by considering the Taylor expansion of $A(t)$ about the midpoint
\[
A\!\left(t+\frac{h}{2}\right)= \frac{1}{h}\sum_{k=0}^{\infty}\alpha_{k+1}(h)\left(\frac{t}{h}\right)^{k}, \qquad
\alpha_{k+1}(h)=h^{k+1}\left.\frac{1}{k!}\frac{d^kA(t)}{dt^k}\right|_{t=\frac{h}{2}},
\]
where now $\alpha_{n}(h)={\cal O}(h^{n})$,
\begin{equation} \label{expan_A}
	\alpha_{2k-1}(-h)=-\alpha_{2k-1}(h), \qquad
	\alpha_{2k}(-h)=\alpha_{2k}(h), \qquad k=1,2,\ldots,
\end{equation}
and by inserting \eqref{expan_A} into \eqref{def_Bn}, we obtain
\begin{displaymath}
	B^{(n)}(h) =\frac{1}{h}\sum_{k=0}^{\infty}\frac{1-(-1)^{k+n+1}}{2^{k+n+1}}\, \alpha_{k+1}(h), \qquad n=0,1,\ldots.
\end{displaymath}
To construct an approximation of ${\cal P}(h)$ up to order $2\ell$ in terms of the matrices $B^{(0)},\ldots,B^{(\ell-1)}$, the strategy is to first express ${\cal P}(h)$ in terms of the coefficients $\alpha_{k+1}(h)$ and then retain only those contributions involving $\alpha_1,\ldots,\alpha_{\ell}$ up to order $2\ell$. Next, with  the linear combinations
\[
\widetilde B^{(n)}(h)
=\frac{1}{h}\sum_{k=0}^{\ell-1} T_{n,k}\,\alpha_{k+1}(h), 
\qquad \text{with} \qquad
T_{n,k}=\frac{1-(-1)^{k+n+1}}{(k+n+1)2^{k+n+1}},
\]
the approximation is written in terms of $\widetilde B^{(n)}(h)$ and, subsequently, these quantities are replaced by $B^{(n)}(h)$. Since the matrix with entries $T_{n,k}$ is invertible, the conditions imposed on $B^{(n)}(h)$ and on $\alpha_{n+1}(h)$, for $n=0,1,\ldots,\ell-1$, are equivalent up to order $2\ell$.

For example, to order four it suffices to consider
\[
 A\!\left(t+\frac{h}{2}\right)= \frac{1}{h}\left(\alpha_{1}+\alpha_{2}\frac{t}{h}\right), 
\]
and  ${\cal P}^{[4]}(h)=I+I_1(h)+I_2(h)+I_3(h)+I_4(h)$, with
\begin{eqnarray*}
	I_1(h)&=&\int_0^hA(t)dt=	\int_{-\frac{h}2}^{\frac{h}2}\frac{1}{h}\left(\alpha_{1}+\alpha_{2}\frac{t}{h}\right)dt=	\alpha_1 \\
	I_2(h)&=&	\int_{-\frac{h}2}^{\frac{h}2}\frac{1}{h}\left(\alpha_{1}+\alpha_{2}\frac{t_1}{h}\right)dt_1
	\int_{-\frac{h}2}^{t_1}\frac{1}{h}\left(\alpha_{1}+\alpha_{2}\frac{t_2}{h}\right)dt_2=\frac12 \alpha_1^2+\frac1{12}[\alpha_2,\alpha_1] \nonumber\\
	I_3(h)&=&	
	\frac16 \alpha_1^3+ \frac1{24}(\alpha_1[\alpha_2,\alpha_1]+[\alpha_2,\alpha_1]\alpha_1)+ {\cal O}(h^5)  \nonumber\\
	I_4(h) & = & \frac1{24} \alpha_1^4+ {\cal O}(h^5).
\end{eqnarray*}
We can then take
\[
J(\alpha_1,\alpha_2;h)=\alpha_1 + \frac12 \alpha_1^2+\frac1{12}[\alpha_2,\alpha_1] + \frac16 \alpha_1^3+ \frac1{24}(\alpha_1[\alpha_2,\alpha_1]+[\alpha_2,\alpha_1]\alpha_1)+\frac1{24} \alpha_1^4,
\]
and replace $\alpha_1,\alpha_2$ by
\begin{equation*} 
\begin{aligned}
  & 	\alpha_1^{(4)} = \frac{h}2(A_1+A_2) =hB^{(0)}(h)+{\cal O}(h^5), \\
  &	\alpha_2^{(4)} = h\sqrt{3}(A_2-A_1) =12 hB^{(1)}(h)+{\cal O}(h^4). 
 \end{aligned} 
\end{equation*}

If the Gauss--Legendre quadrature rule is used, then for $\ell=1,2,3$ we have the following values for $\alpha_{n+1}^{(2\ell)}, \ n+1\leq\ell$ (other quadrature rules can  be used just as well)
\begin{itemize}
	\item $\ell = 1$, order two: $c_1=\frac12$ and $\widetilde B^{(0)}(h)=\frac1{h}\alpha_1$. Replacing $\widetilde B^{(0)}(h)$ by $B^{(0)}(h)$ 
	and approximating the integral by the quadrature rule we have
	\begin{equation} \label{g_or2}
		\alpha_1^{(2)} = hA_1.
	\end{equation}    
	\item $\ell = 2$, order four: $c_1=\frac12-\frac{\sqrt{3}}{6}, \ c_2=\frac12+\frac{\sqrt{3}}{6}$, and similarly
	\[
	\widetilde B^{(0)}=\frac1{h}\alpha_1, \qquad 
	\widetilde B^{(1)}=\frac1{12h}\alpha_2, \qquad 
	\] 
	so that	
	\begin{equation} \label{g_or4} 
		\alpha_1^{(4)} =\frac{h}2(A_1+A_2), \qquad \qquad \alpha_2^{(4)}=h^2\frac{A_2-A_1}{(c_2-c_1)h}=h\sqrt{3}(A_2-A_1).
	\end{equation} 
	\item $\ell = 3$, order six: $c_1=\frac12-\sqrt{\frac3{20}}, \ c_2=\frac12, \ c_3=\frac12+\sqrt{\frac3{20}}$, and 
	\begin{displaymath}
	\widetilde B^{(0)}=\frac1{h}(\alpha_1+\tfrac1{12}\alpha_3), \qquad 
	\widetilde B^{(1)}(h)=\frac1{12h}\alpha_2, \qquad 
	\widetilde B^{(2)}=\frac1{h}(\tfrac1{12}\alpha_1+\tfrac1{80}\alpha_3), \qquad 
	\end{displaymath}
	so that
	\[
	\alpha_1=h(\tfrac9{4}\widetilde B^{(0)}-15\widetilde B^{(2)}), \quad
	\alpha_2=12h\widetilde B^{(1)}, \quad
	\alpha_3=h(-15\widetilde B^{(0)}+180\widetilde B^{(2)}), 
	\] 
	and finally
	\begin{equation} \label{g_or6}
		\begin{aligned}
			& \alpha_1^{(6)} =hA_2, \qquad \qquad \alpha_2^{(6)} =h^2\frac{A_3-A_1}{(c_3-c_1)h}=\frac{\sqrt{15}h}3(A_3-A_1), \\
			& \alpha_3^{(6)} =\frac{h^3}{2}\frac{A_1-2A_2+A_3}{((c_3-c_2)h)^2}=\frac{10h}3(A_1-2A_2+A_3).
		\end{aligned} 
	\end{equation}
	
\end{itemize}
Observe that, at order six, we have
\[
\begin{array}{l}
	\alpha_1^{(6)} = h\big(\tfrac{9}{4} B^{(0)} - 15 B^{(2)}\big) + \mathcal{O}(h^5), \\[4pt]
	\alpha_2^{(6)} = 12h\, B^{(1)} + \mathcal{O}(h^6), \\[4pt]
	\alpha_3^{(6)} = h\big(-15 B^{(0)} + 180 B^{(2)}\big) + \mathcal{O}(h^5),
\end{array}
\]
which, at first glance, does not appear to exhibit the expected order behaviour,
but it turns out that
\[
\begin{array}{l}
	\alpha_1^{(6)} +\frac1{12}\alpha_3^{(6)} =hB^{(0)}+{\cal O}(h^7), \\
	\frac1{12}\alpha_2^{(6)} =hB^{(1)}(h)+{\cal O}(h^6), 
	\\
	\frac1{12}\alpha_1^{(6)} +\frac1{80}\alpha_3^{(6)} =hB^{(2)}+{\cal O}(h^5),
\end{array}
\] 
as required to provide a 6th-order method.

It is worth to mention that
\[
\beta_{n+1}(h)=\frac{(n!)^2}{(2n)!} \,\alpha_{n+1}(h) + {\cal O}(h^{n+3}),
\]
which shows that the univariate integrals $\beta_{n+1}(h)$ are, to leading order, also proportional to the $n$-th derivative of $A$ evaluated at the midpoint.

\end{document}